\documentclass{amsart}
\usepackage{amsmath,amsfonts,amssymb,amsthm,hyperref}
\usepackage{comment}
\usepackage{color}

\newtheorem{lem}{Lemma}
\newtheorem{thm}[lem]{Theorem}
\newtheorem{prop}[lem]{Proposition}
\newtheorem{cor}[lem]{Corollary}
\newtheorem{conj}[lem]{Conjecture}
\theoremstyle{definition}

\theoremstyle{remark}

\DeclareMathOperator{\vol}{vol}

\newcommand{\R}{\mathbb{R}}

\usepackage[isbn=false, url=false, doi=false, eprint=false]{biblatex}
\title{The equality case in Godbersen's inequality and its generalizations}
\author{Omer Friedland, Eli Putterman and Maud Szusterman}

\begin{document}

\begin{abstract} Godbersen's long-standing conjecture states that the mixed volumes of any convex body $K \subset \R^n$ satisfy $V(K[k], -K[n-k])\le \binom{n}{k} \vol(K)$ for any $k \in \{1, \dots, n - 1\}$, with equality if and only if $K$ is an $n$-simplex. Kotrbat\'y and Mouamine recently proved the inequality in full generality, and showed that if $K$ is a (full-dimensional) polytope attaining equality, it must be a simplex. We complete the characterization of the equality case: if $K \subset \R^n$ is a convex body for which $V(K[k], -K[n-k]) = \binom{n}{k} \vol(K)$ for some $k \in \{1, \dots, n -1\}$, then $K$ is an $n$-simplex. By a similar method, we characterize the equality case in the higher-order Godbersen inequality conjectured by Schneider and by Kotrbat\'y.
\end{abstract}

\maketitle

\section{Introduction}
We write $\mathcal K^n$ for the set of compact convex sets in $\R^n$, $\vol_n(K)$ for the volume of a convex set $K \in \mathcal K^n$, and say that a compact convex set is a convex body if $\vol_n(K) > 0$. We use the notation $V_n(K_1, \dots, K_n)$ for the mixed volume of $n$ convex bodies in $\R^n$, and in the arguments to $V_n$, we use $K[j]$ to denote $j$ copies of $K$.

In 1938, Godbersen \cite{godbersen1938satz} made the following conjecture:

\begin{conj}[Godbersen's conjecture]\label{conj:god}
For any convex body $K\subset \R^n$ and any $1\le k\le n-1$,  
\begin{equation*}
V_n(K[k], -K[n-k])\le \binom{n}{k} \vol_n(K),\end{equation*}
with equality if and only if $K$ is an $n$-simplex.
\end{conj}

After partial results by several authors \cite{AEFO2015,ASS2023,sadovsky2024,AP2025}, the inequality part of the conjecture was proven recently by Kotrbat\'y-Mouamine \cite{KM2026}, who also obtained the equality case under the assumption that $K$ is a polytope:

\begin{thm}[{{\cite[Theorem 1.1]{KM2026}}}] \label{thm:km_ineq}
For any convex body $K\subset \R^n$ and any $1\le k\le n-1$,  
\begin{equation}\label{eq:Godbersen-ineq}
V_n(K[k], -K[n-k])\le \binom{n}{k} \vol_n(K).\end{equation}
If $K$ is an $n$-polytope and equality holds in \eqref{eq:Godbersen-ineq}, then $K$ is an $n$-simplex.
\end{thm}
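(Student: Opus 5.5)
The plan is to realize the mixed volumes $V_n(K[k],-K[n-k])$ as coefficients of a polynomial in a dilation parameter. That polynomial is then compared, coefficient by coefficient, with one built from $\vol_n(K)$ alone, in the spirit of the Rogers--Shephard argument for the difference body. For $\lambda,\mu\ge 0$ the Minkowski polynomial gives
\begin{equation*}
\vol_n(\lambda K+\mu(-K))=\sum_{k=0}^n\binom{n}{k}\lambda^k\mu^{n-k}V_n(K[k],-K[n-k]).
\end{equation*}
So the target inequalities say that this polynomial is dominated, coefficientwise, by $\vol_n(K)\sum_k\binom nk^2\lambda^k\mu^{n-k}$. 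An $n$-simplex attains equality for every $k$, which serves as a consistency check throughout.

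\textbf{Step 1 (the body in $\R^{2n}$).} Consider $C=\{(x,y)\in\R^{2n}: x\in K,\ x-y\in K\}$. Its fibres over $y$ are the intersections $K\cap(K+y)$, whose volume is the covariogram $g_K(y)$. Its projection is $K-K$, and $\vol_{2n}(C)=\vol_n(K)^2$.

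\textbf{Step 2 (sections along a line).} Take sections of $C$ by affine $n$-planes that interpolate between the ``$x$-direction'' and the ``$y$-direction''. For instance, project $C$ onto the graph of $y=tx$, or take slices $\{(x,y): y=sx+z\}$.
\begin{itemize}
\item For each fixed parameter, a slice is a convex body whose volume involves $\lambda K+\mu(-K)$ with $(\lambda,\mu)$ depending on $s$.
\item Fubini, applied to these slices, expresses $\vol_n(K)^2$ as an integral of such volumes.
\item The Brunn--Minkowski inequality for the $1/n$-th power of the fibre volumes gives a lower bound for the fibre volumes in terms of $\vol_n(K)$ and the size of the slice.
\end{itemize}
Combining the two yields an inequality between a polynomial in $(\lambda,\mu)$ built from the $V_n(K[k],-K[n-k])$ and $\vol_n(K)\binom nk^2$-type weights. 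The goal is then to localize to a single coefficient, either by differentiating at $\lambda=0$ or by choosing a one-parameter family that isolates the $k$-th term.

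\textbf{Main obstacle.} I expect this localization to be the hardest step. Rogers--Shephard-type integration naturally controls only an averaged combination such as $\sum_k\binom nk^{-1}V_n(K[k],-K[n-k])$ (this is what gives the earlier partial results). Extracting each individual $k$ requires a genuinely sharper tool. My first attempt would be to apply Brunn--Minkowski to a $2n$-dimensional body adapted to the index $k$, namely $\{(x,y): x\in K,\ y\in K,\ \text{with the weights } k,\ n-k\}$. Failing that, I would exploit concavity of $t\mapsto g_K(tx)^{1/n}$ along rays, integrated against the radial function of $-K$ with the weight $t^{k-1}(1-t)^{n-k-1}$ coming from a Beta integral. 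That Beta weight is exactly what produces $\binom nk$.

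\textbf{Step 3 (equality for polytopes).} If equality holds, every application of Brunn--Minkowski in Step~2 is an equality. The relevant sections $K\cap(K+y)$ must then be homothetic copies of $K$ for all $y$ in a cone of directions. For a polytope this homothety condition can be read off at each vertex $v$. It forces the tangent cone of $K$ at $v$ to be such that $K\cap(K+y)=K+y'$-type translates for small $y$ in that cone, and this makes the cone simplicial with exactly $n$ edges. A polytope all of whose vertex cones are simplicial, and whose facets are all hit in this homothetic way, must have exactly $n+1$ facets. Hence $K$ is an $n$-simplex.
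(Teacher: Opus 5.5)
\textbf{There is a genuine gap in Step 2.} It sits exactly where you place the ``main obstacle'': the termwise inequality $V_n(K[k],-K[n-k])\le\binom{n}{k}\vol_n(K)$ is never established, and the localization you propose cannot deliver it.

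Fubini on your body $C$, together with concavity of $g_K^{1/n}$ on $K-K$, yields the Rogers--Shephard bound $\vol_n(K-K)\le\binom{2n}{n}\vol_n(K)$. In coefficients this reads $\sum_k\binom{n}{k}V_n(K[k],-K[n-k])\le\sum_k\binom{n}{k}^2\vol_n(K)$. Variants of this argument control either positive combinations of all the coefficients, or single coefficients with non-sharp constants.

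The localization step fails for a concrete reason. The numbers $V_n(K[k],-K[n-k])$ are the Bernstein coefficients of $\lambda\mapsto\vol_n(\lambda K+(1-\lambda)(-K))$. A pointwise inequality between two such polynomials on $[0,1]$ does not give an inequality between their individual Bernstein coefficients. Differentiating at $\lambda=0$ recovers at best the first one, i.e.\ the case $k=1$, which is already the classical consequence of $q(K)\le n$. Your two fallbacks (a ``$2n$-dimensional body with weights $k,n-k$'' and a Beta-weighted ray integral) are not actually specified. Both would still integrate a one-sided pointwise bound against the whole polynomial.

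\textbf{The missing idea isolates $k$ with no integration.} It is due to Kotrbat\'y--Mouamine and is reproduced in \S\ref{sec:higher_godbersen} for $p=1$. In $\R^{2n}$, with block embeddings $i_1,i_2$ and diagonal $\Delta$, set $W=V_{2n}(\Delta K[n],i_1K[n-k],i_2K[k])$.
\begin{itemize}
\item The map $A(x,y)=(-x,y-x)$ has $|\det A|=1$ and sends $\Delta\mapsto -i_1$, $i_1\mapsto-\Delta$, $i_2\mapsto i_2$. So $W=V_{2n}(-i_1K[n],-\Delta K[n-k],i_2K[k])$, and the projection formula (Lemma \ref{lem:exact}) gives $W=\binom{2n}{n}^{-1}\vol_n(K)\,V_n(K[k],-K[n-k])$.
\item On the other hand $\Delta K\subset i_1K+i_2K$. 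In the multilinear expansion, the dimension constraints kill every term except $s=k$, so $W\le\binom{n}{k}V_{2n}(i_1K[n],i_2K[n])=\binom{n}{k}\binom{2n}{n}^{-1}\vol_n(K)^2$.
\end{itemize}
The index $k$ is encoded in how many copies of $i_1K$ and $i_2K$ appear, so the localization you were looking for is automatic.

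\textbf{Step 3 has nothing to rest on.} Since Step 2 produces no inequality, Step 3 has no inequality whose equality case it could be. It is also incomplete as a heuristic. Simplicial vertex cones only make $K$ simple (the cube qualifies), so everything rests on the unstated ``homothetic'' condition. Moreover, equality for one $k$ would not force equality in an averaged inequality that involves the other coefficients.

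In the real argument, equality means equality in the single monotonicity step coming from $\Delta K\subset K^2$. Kotrbat\'y--Mouamine analyze this for polytopes via face structure and the supports of mixed area measures. This paper instead feeds it into the Alexandrov--Fenchel equality case to reduce to $k=1$ for all convex bodies.
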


In the first main result of this note, we characterize the equality case of \eqref{eq:Godbersen-ineq} among all convex bodies in $\R^n$, completing the resolution of Godbersen's conjecture:

\begin{thm}\label{thm:godbersen_equality} If $K \subset \R^n$ is a convex body satisfying $V_n(K[k], -K[n - k]) = \binom{n}{k} \vol_n(K)$ for some $k\in \{1,\dots,n-1\}$, then $K$ is an $n$-simplex.
\end{thm}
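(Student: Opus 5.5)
Since Theorem~\ref{thm:km_ineq} already settles the polytope case, the plan is to reduce the equality case for a general convex body $K$ to the polytopal one. The simplest tool for this is the Rogers--Shephard-type behaviour of the functional
\[
\Phi_k(K)=\frac{V_n(K[k],-K[n-k])}{\vol_n(K)}.
\]
We want to show that if $\Phi_k(K)=\binom nk$ then $K$ has only finitely many extreme points, and in fact exactly $n+1$ of them. The natural mechanism is a \emph{local improvement}. If $K$ is not a simplex, we construct a polytope $P$, or more generally a body, with $\Phi_k(P)>\Phi_k(K)$. This contradicts \eqref{eq:Godbersen-ineq}.

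\textbf{Step 1: reduction to $k=1$ via Alexandrov--Fenchel.} The sequence $j\mapsto V_n(K[j],-K[n-j])$ is log-concave by Alexandrov--Fenchel. Combined with the inequality for every index, equality at some $k$ should force equality at the neighbouring indices, and hence at $k=1$. We will check this by comparing $\log V_j - \log\binom nj$, which is concave in $j$, vanishes at $j=0,n$, is $\le 0$ everywhere and vanishes at $k$. If it vanishes at an interior point, concavity forces it to vanish identically. So $V_n(K,-K[n-1])=n\vol_n(K)$.

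\textbf{Step 2: the $k=1$ equality.} We use
\[
V_n(K,-K[n-1])=\frac1n\int_{S^{n-1}} h_K(u)\,dS_{-K}(u)=\frac1n\int h_K(-u)\,dS_K(u).
\]
Translate $K$ so that $0\in K$. Then $h_K(-u)\le h_K(u)+h_{K}(-u)=w_K(u)$, and the target equality reads
\[
\int h_K(-u)\,dS_K(u)=n^2\vol_n(K)\cdot\tfrac1n=\int h_K(u)\,dS_K(u)\cdot n .
\]
So the goal is a pointwise statement: for $S_K$-a.e.\ $u$, the point of $K$ farthest in direction $-u$ sits in a rigid position relative to the facet in direction $u$. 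Concretely, we aim to show that the inequality proof, as in Kotrbat\'y--Mouamine, passes through an integral of a pointwise-nonnegative quantity. Equality then forces that quantity to vanish on $\operatorname{supp} S_K$. The plan is to follow their argument and extract this pointwise condition, namely that every boundary point $x$ with normal $u$ has the opposite support point at ``$n$ times the distance'' in the appropriate centered coordinates.

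\textbf{Step 3: from the pointwise condition to a simplex.} Here we argue that the condition forces $S_K$ to be discrete. Suppose $S_K$ had a non-atomic part, e.g.\ a curved boundary region. Then the opposite support points $F(K,-u)$ would vary continuously with $u$ while satisfying the rigid relation. This is incompatible with convexity, since two nearby normals would require different opposite vertices. So $K$ is a polytope, and Theorem~\ref{thm:km_ineq} finishes the proof.

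The main obstacle is Step 2: isolating a genuinely pointwise equality condition from the Kotrbat\'y--Mouamine proof, which may rely on polytope-specific combinatorics. If that fails, we will try an approximation argument instead. We would approximate $K$ by polytopes $P_m$ obtained by cutting off a small cap near a smooth or non-vertex boundary point, and compute the first variation of $\Phi_1$. We then show that the first variation is strictly positive in some direction unless $K$ is a polytope, which again contradicts the inequality.
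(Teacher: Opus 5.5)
\textbf{There is a genuine gap in Step 1, and it is where the whole difficulty of the theorem lies.}

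Alexandrov--Fenchel does make $j\mapsto \log V_n(K[j],-K[n-j])$ concave. But $j\mapsto\log\binom nj$ is concave too, since binomial coefficients are log-concave. So $f(j)=\log V_n(K[j],-K[n-j])-\log\binom nj-\log\vol_n(K)$ is a difference of two concave sequences and need not be concave.

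Your argument also proves too much. A concave $f$ with $f(0)=f(n)=0$ is $\ge 0$ on $\{0,\dots,n\}$. Together with $f\le 0$, this would give $f\equiv 0$ for \emph{every} convex body, without ever using the hypothesis at $k$. This fails for any centrally symmetric $K$, e.g.\ a cube: there $V_n(K[j],-K[n-j])=\vol_n(K)$, so $f(j)=-\log\binom nj$, which is strictly convex and negative for $0<j<n$.

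The underlying problem is the direction of the inequalities. AF bounds the middle terms from \emph{below} by the outer ones: $V_j^2\ge V_{j-1}V_{j+1}$. When $V_k$ attains the upper bound $\binom nk\vol_n(K)$, AF only gives an upper bound on $V_{k-1}V_{k+1}$. So no equality can propagate from $k$ to $k\pm1$ inside $\R^n$.

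\textbf{The missing idea is to pass to $\R^{2n}$.} There the Godbersen bound at $k$ is itself a monotonicity inequality:
\[
V_{2n}(\Delta K[n],i_1K[k],i_2K[n-k])\le V_{2n}(K^2[n],i_1K[k],i_2K[n-k]).
\]
\begin{itemize}
\item Replace one copy of $\Delta K$ by $K^2$ at a time. Equality at the two ends forces equality along this whole chain.
\item These intermediate equalities are now \emph{equality cases} of AF. The AF equality characterization then lets you free $n-s$ of the slots (Proposition~\ref{prop:moreequalities-AFargument}).
\item Filling those slots with $-\Delta K$ and using the projection formula gives $\alpha_s\alpha_k=\sum_i\binom si H_{k+i,n-k+s-i,n-s}$.
\item The $p=2$ higher-order inequality $H_{a,b,c}\le\binom{n}{n-a,n-b,n-c}\vol_n(K)^2$, with a Vandermonde identity, is applied at $s=k$ and $s=n-k$. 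This forces equality in $H_{k+1,n-k,n-1}$ and $H_{k,n-k+1,n-1}$.
\item Taking $s=1$ then yields $\alpha_1=n\vol_n(K)$.
\end{itemize}

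\textbf{Steps 2--3 are unnecessary once you reach $k=1$.} That case is classical: $V_n(-K,K[n-1])\le q(K)\vol_n(K)$, and $q(K)\le n$ with equality only for simplices. As written, Step~3 would not work anyway. The claim ``two nearby normals would require different opposite vertices'' is not an argument, because continuous variation of $F(K,-u)$ contradicts nothing you have established. Your fallback first-variation plan contains no computation, and carrying it out would itself be the entire difficulty.
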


The proof of the equality case for polytopes in \cite{KM2026} relies heavily on the face structure of polytopes and on known properties of the support of their mixed area measures. We adopt a different approach, relating the equality case in Godbersen's conjecture to the recent higher-order generalization of Godbersen's conjecture. 

To state this generalization requires a bit more notation. For a positive integer $p$, we think of $\R^{pn}$ as the Cartesian product $\overbrace{\R^n \times \cdots \times \R^n}^{\text{$p$ times}}$. Let $\imath_1, \dots, \imath_p: \R^n \to \R^{pn}$ be the $p$ block coordinate embeddings, and let $\Delta_p: \R^n \to \R^{pn}$, $\Delta(x) = (x, x, \dots, x)$ be the diagonal embedding.

\begin{conj}[Higher-order Godbersen's conjecture]\label{conj:higher_godbersen}
For any convex body $K\subset \R^n$, any integer $p \ge 1$, and any $a_0, \dots, a_p \in \{1, \dots, n\}$ with $a_0 + \cdots + a_p = pn$, one has
\begin{equation}\label{eq:higher_godbersen}
V_{pn}(-\Delta_p K[a_0], \imath_1 K[a_1], \dots, \imath_p K[a_p]) \le \binom{n}{n - a_0, \dots, n - a_p} \frac{(n!)^p}{(pn)!} \vol_n(K)^p,\end{equation}
with equality if and only if $K$ is an $n$-simplex.
\end{conj}

Conjecture \ref{conj:higher_godbersen} was put forward by Schneider \cite{S2000}, without an explicit computation of the right-hand side of \eqref{eq:higher_godbersen} for a simplex, and later by Kotrbat\'y \cite{K2025}, who proved it in the class of anti-blocking bodies. Kotrbat\'y \cite{K2025} also drew interesting connections between Conjecture \ref{conj:higher_godbersen} and valuation theory. Fry\v{s} and Kotrbat\'y \cite{FK2026} have also recently studied Conjecture \ref{conj:higher_godbersen} extensively, proving (among other results) higher-order generalizations of the results of \cite{AEFO2015,AP2025}.

It was known to the experts, and is stated in the introduction of \cite{KM2026}, that Kotrbat\'y-Mouamine's proof of \eqref{eq:Godbersen-ineq} extends immediately to yield a proof of the ``higher-order'' inequality \eqref{eq:higher_godbersen}. As this argument has yet to appear in the literature (although a certain special case recently appeared in \cite{CFLXX2026}), we give it below, in \S \ref{sec:higher_godbersen}, in full detail; we claim no originality for this proof.

In our second main result, we extend the characterization of the equality case to the Godbersen-type inequalities of any order:

\begin{thm}\label{thm:higher_godbersen_equality} If $K \subset \R^n$ is a convex body satisfying equality in \eqref{eq:higher_godbersen} for some $p\ge 1$ and some $a_0, \ldots, a_p \in \{1, \ldots, n\}$ with $a_0 + \cdots + a_p = pn$, then $K$ is an $n$-simplex.
\end{thm}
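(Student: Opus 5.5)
We want to characterize equality in \eqref{eq:higher_godbersen}. The Kotrbat\'y--Mouamine approach (to be reproduced in \S\ref{sec:higher_godbersen}) presumably writes the left-hand side as a limit or integral expression in which the inequality reduces to a pointwise or monotonicity statement, for instance via Rogers--Shephard type convolution or covariogram arguments. The plan is to extract from equality a rigid geometric condition that forces $K$ to be a simplex, without using any polytope structure.

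\emph{Step 1: locate the slack.} Go through the proof of \eqref{eq:higher_godbersen} and identify every inequality used. Equality in \eqref{eq:higher_godbersen} forces each of these to be an equality. I expect one of them to be a Rogers--Shephard or Brunn--Minkowski type step for the body
\[
\{(x_1,\dots,x_p): x_i \in K - x_0 \text{ for some } x_0\}
\]
or a related section/projection. In that case equality should force a concavity statement to become linearity, for example $\vol_n\big((K+x)\cap K\big)^{1/n}$ being affine along rays from the origin in the difference body, or the analogous statement for $\bigcap_i (K+x_i)$.

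\emph{Step 2: reduce to a known characterization.} Equality in the Rogers--Shephard inequality (equivalently, $(1/n)$-concavity of the covariogram being affine along rays) holds only for simplices. This is the classical Rogers--Shephard/Choquet characterization: every nonempty intersection $K\cap(K+x)$ is homothetic to $K$. I will aim to show that equality in \eqref{eq:higher_godbersen} yields exactly this condition for the higher-order covariogram $g(x_1,\dots,x_p)=\vol_n\big(K\cap\bigcap_i(K+x_i)\big)$. Specializing to $x_2=\dots=x_p=0$ then reduces to the classical $p=1$ covariogram, from which the simplex conclusion follows. As a byproduct, this route also recovers Theorem~\ref{thm:godbersen_equality}, the case $p=1$.

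\emph{Step 3: the main obstacle.} The difficulty is that the inequality is presumably proven through a derivative or expansion in which only one coefficient (indexed by the $a_i$) appears. Equality for a single choice $(a_0,\dots,a_p)$ may then only give equality of that coefficient, not the full linearity of $g^{1/n}$. To handle this, I would try two things:
\begin{itemize}
\item show that the relevant inequalities hold termwise with nonnegative slack, so that equality of one mixed volume forces a whole log-concave (Alexandrov--Fenchel) sequence to be extremal;
\item alternatively, show that the inequality comes from integrating a pointwise inequality over directions against the mixed area measure, so that equality holds on the support of that measure. The support can then be enlarged to all directions using the full-dimensionality of $K$, $-K$ and the diagonal embedding.
\end{itemize}
Converting this almost-everywhere, support-level equality into a genuine homothety of the intersections $K\cap(K+x)$ is where I expect the real work to lie, likely via a continuity or approximation argument.
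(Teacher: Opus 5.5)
There is a genuine gap. What you have written is a research plan whose decisive step is missing, as you acknowledge in Step 3. Steps 1--2 also rest on a guess about the proof of the inequality that does not match the actual proof.

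\emph{Where the slack actually is.} The proof of Theorem \ref{thm:higher_godbersen} contains no Rogers--Shephard, covariogram or Brunn--Minkowski step. The left-hand side is identified exactly, up to the factor $\binom{(p+1)n}{n}^{-1}\vol_n(K)$, with the lifted mixed volume $\mathcal V_{a_0,\dots,a_p}$ in $\R^{(p+1)n}$, via a linear map and the projection formula. The \emph{only} inequality is monotonicity under $\Delta_{p+1}K\subset K^{p+1}$ in $n$ slots; the multinomial expansion that follows is exact, since all but one term vanish for dimensional reasons. So equality yields an identity of mixed volumes, $V_{(p+1)n}(\Delta_{p+1}K[n],\mathcal C)=V_{(p+1)n}(K^{p+1}[n],\mathcal C)$ with $\mathcal C=(\tilde\imath_0K[a_0],\dots,\tilde\imath_pK[a_p])$. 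This is not a pointwise statement about $\vol_n(K\cap\bigcap_i(K+x_i))$, and nothing in your plan converts it into affinity of $g^{1/n}$ along rays.

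\emph{Why a single equality does not reach Rogers--Shephard.} For $p=1$ one has $\vol_n(K-K)=\sum_k\binom{n}{k}\alpha_k$ with each $\alpha_k\le\binom{n}{k}\vol_n(K)$. So Rogers--Shephard equality is equivalent to equality for \emph{every} $k$ at once, and a single equality gives no direct access to it.

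\emph{The support-based bullet.} Your second bullet is essentially the polytopal method of Kotrbat\'y--Mouamine. For general $K$ it is the whole difficulty, not a continuity or approximation detail. Support-level equality only tells you $h_K(u_0+\dots+u_p)=\sum_i h_K(u_i)$ for $(u_0,\dots,u_p)$ in the support of the relevant mixed area measure. You give no way to control that support, and it cannot be ``enlarged to all directions'' in general: for polytopal $K$ these measures are finitely supported.

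The paper's argument needs two ideas absent from your proposal.

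\emph{First idea: freeing slots via Alexandrov--Fenchel.} In the monotone chain $M_r=V_{(p+1)n}(\Delta_{p+1}K[n-r],K^{p+1}[r],\mathcal C)$, the equality $M_0=M_n$ forces all $M_r$ to coincide. Hence every Alexandrov--Fenchel inequality $M_r^2\ge M_{r-1}M_{r+1}$ is an equality, and its equality case (Theorem \ref{thm:af_equality}) lets one free the slots one at a time (Proposition \ref{prop:moreequalities-AFargument}). Keeping a single copy of $\Delta_{p+1}K$ versus $K^{p+1}$ and filling the $n-1$ free slots with $-\Delta_{p+1}K$ gives $\alpha_1\,v^{(p)}_{a_0,\dots,a_p}=\sum_{\ell=0}^p v^{(p+1)}_{n-1,a_0,\dots,a_\ell+1,\dots,a_p}$, where $\alpha_1=V_n(K,-K[n-1])$. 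Your first bullet points in this direction but stops well short of such an identity, which is useless until its right-hand side is known.

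\emph{Second idea: forcing equality at order $2p$.} To evaluate that right-hand side, the paper uses the inclusion $-\Delta_{p,0}K\subset-\Delta_{2p}K+j_1K+\dots+j_pK$ in $\R^{2pn}$. Monotonicity and the projection formula bound $(v^{(p)}_{a_0,\dots,a_p})^2$ by a sum of order-$2p$ quantities, each bounded by the order-$2p$ inequality. A multinomial Vandermonde identity shows these bounds add up exactly to the square of the assumed value, which forces termwise equality. Through \eqref{eq:dimensiondown} this gives the needed order-$(p+1)$ equalities, and Pascal's identity then yields $\alpha_1=n\vol_n(K)$.

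The simplex conclusion then comes from $V_n(-K,K[n-1])\le q(K)\vol_n(K)$ and $q(K)\le n$, with equality only for simplices (Proposition \ref{prop:godbersen_j=1_equality}). It does not come from Rogers--Shephard or from homothety of $K\cap(K+x)$.
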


The proof of Theorem \ref{thm:godbersen_equality} goes as follows. As shown in \cite{KM2026}, equality in Godbersen's inequality for a given $K$ implies equalities between certain mixed volumes of different convex sets in $\mathbb R^{2n}$ constructed from $K$ (cf. \eqref{eq:godbersen_equality_lifted}), which in particular give equality cases in the corresponding Alexandrov-Fenchel inequalities. Applying a standard characterization of the equality case in Alexandrov-Fenchel yields equality between two families of mixed volumes in $\R^{2n}$ with $s\leq n - 1$ free convex body parameters (cf. \eqref{eq:godbersen_free}). By tuning $s\in \{1, \ldots,n-1\}$ and by choosing the free parameters carefully, we obtain that certain mixed volumes (denoted $H_{a,b,c}$ in \S 4, and which correspond to the LHS in \eqref{eq:higher_godbersen}) must satisfy equality in \eqref{eq:higher_godbersen}. Using these equalities, we then show that $K$ must also attain equality in the (long-known) $k = 1$ case of Godbersen's equality. Since equality in the case $k = 1$ of Godbersen's inequality is known to characterize the simplex, this suffices. 

The proof of Theorem \ref{thm:higher_godbersen_equality} goes along the same lines, but is a bit more complicated: it requires an additional application of the method of proof of the higher-order Godbersen's inequality \eqref{eq:higher_godbersen} (as presented in section \ref{sec:higher_godbersen}), applied to a different inclusion (see \eqref{eq:inclusionp0} below). Although Theorem \ref{thm:godbersen_equality} is a special case of Theorem \ref{thm:higher_godbersen_equality}, we include a separate proof of the former, as it is simpler and already contains all the ideas needed for proving Theorem \ref{thm:higher_godbersen_equality}.

The remainder of this note is organized as follows. In \S \ref{sec:preliminaries} we state the background results we shall use. In \S \ref{sec:higher_godbersen} we prove the higher-order Godbersen's inequality, \eqref{eq:higher_godbersen}. In \S \ref{sec:godbersen_equality} we prove Theorem \ref{thm:godbersen_equality}, the equality case in Godbersen's inequality; and in \S \ref{sec:higher_godbersen_equality} we prove Theorem \ref{thm:higher_godbersen_equality}, the equality case in the higher-order Godbersen inequality.

\subsection*{Related work} 
In \cite{K2025}, Kotrbat\'y also made the following conjecture, which is weaker than Conjecture \ref{conj:higher_godbersen}: for every convex body $K \in \mathcal K^n$,
\begin{equation}\label{eq:weak_higher_godbersen} V_{pn}(-\Delta_p K[k], K^p[pn - k]) \le \binom{n}{k} \vol(K)^p 
\end{equation}
where $K^p = \overbrace{K \times \cdots \times K}^{\text{$p$ times}}$, with equality if and only if $K$ is a simplex. Recently, Fry\v{s} \cite{F2026} found a proof of \eqref{eq:weak_higher_godbersen} for $p > 1$ which also demonstrates that equality in \eqref{eq:weak_higher_godbersen} holds only if $K$ is a simplex.

Concurrently with this work, Fry\v{s},  Kotrbat\'y, Mouamine and Wang \cite{FKMW2026}  developed the geometric method of \cite{KM2026} further and were able to use it to characterize the equality cases in the Godbersen inequalities of any order by a completely different route. In the first version of this note, we only proved Theorem \ref{thm:godbersen_equality}; shortly after the appearance of \cite{FKMW2026}, we figured out how to push our method further in order to prove Theorem \ref{thm:higher_godbersen_equality} as well.

\subsection*{Acknowledgements} 
We would like to thank Filip Fry\v{s} for showing us his proof of the ``two-body Godbersen inequality'' (not yet published), 
from which we drew inspiration for the present
proof of Theorem \ref{thm:godbersen_equality}; Mohamed Mouamine for showing us Proposition \ref{prop:moreequalities-AFargument} and its proof; and the organizers of the Prague Conference in Convex Geometry, where this work began.

\section{Preliminaries}\label{sec:preliminaries}
Our general reference for convex geometry is the book of Schneider \cite{schneider2014book}; any results stated below without further reference may be found there.

We recall some basic properties of mixed volumes: for any $K, L, K_1, \dots, K_n \in \mathcal K^n$, we have:

\begin{enumerate}
\item[(i)] \emph{(non-negativity)} $V_n(K_1, \dots, K_n) \ge 0$.
\item[(ii)] $V_n(K[n]) = \vol_n(K)$.
\item[(iii)] \emph{(multilinearity)} For $a, b > 0$, 
\[V_n(aK + bL, K_2, \dots, K_n) = a V_n(K, K_2, \dots, K_n) + b V_n(L, K_2, \dots, K_n).\]
\item[(iv)] \emph{(translation invariance)} For any $x_1, \dots, x_n \in \R^n$, $V_n(K_1 + x_1, \dots, K_n + x_n) = V_n(K_1, \dots, K_n)$.
\item[(v)] For any $d \le n$, if $K_1, \ldots, K_d$ all lie in an affine subspace of dimension less than $d$, then
\[V_n(K_1, \ldots, K_d, K_{d + 1}, \dots, K_n) = 0.\]
\item[(vi)] \emph{(permutation invariance)} For any permutation $\sigma \in S_n$, 
\[V_n(K_{\sigma(1)}, \dots, K_{\sigma(n)}) = V_n(K_1, \dots, K_n).\]
\item[(vii)] \emph{(monotonicity)} If $K \subset L$ then 
$V_n(K, K_2, \dots, K_n) \le V_n(L, K_2, \dots, K_n)$.
\item[(viii)] \emph{(linear invariance)} If $A: \R^n \to \R^n$ is a linear transformation then
\[V_n(AK_1, AK_2, \dots, AK_n) = |\det A| V_n(K_1, K_2, \dots, K_n).\]
\end{enumerate}

For $K \in \mathcal K^n$, let 
\[q(K) = \inf \{t > 0\,|\, \exists x \in \R^n: -K + x \subset t K\},\]
which is called the Minkowski measure of symmetry. By monotonicity and translation-invariance of mixed volumes, one has 
\[V_n(-K, K[n - 1]) \le V_n(q(K) K, K[n - 1]) = q(K) \vol_n(K).\]
Moreover, it is known that $q(K) \le n$ for any $K \in \mathcal K^n$, with equality only for the simplex \cite{schneider2009stability}; in particular:

\begin{prop}\label{prop:godbersen_j=1_equality} Equality in the case $k = 1$ of Godbersen's inequality \eqref{eq:Godbersen-ineq} holds only for a simplex.
\end{prop}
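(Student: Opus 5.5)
The plan is to derive the proposition from the two facts recalled just before it: the monotonicity bound $V_n(-K,K[n-1]) \le q(K)\vol_n(K)$, and the bound $q(K)\le n$ with equality only for the simplex \cite{schneider2009stability}. For $k=1$ the right-hand side of \eqref{eq:Godbersen-ineq} is $n\vol_n(K)$. By permutation invariance, $V_n(K[1],-K[n-1]) = V_n(-K[n-1],K)$. Since $V_n(-L,(-L)[n-1])$ with $L=-K$ relates to $V_n(K,-K[n-1]) = V_n(-L, L[n-1])$, and $q(-K)=q(K)$, it suffices to treat $V_n(-K,K[n-1])$. (The Minkowski measure of symmetry is invariant under $K\mapsto -K$: if $-K+x\subset tK$, then $K-x\subset -tK$, i.e.\ $K + (-x/t)\cdot t \subset t(-K)$ after rescaling the translation.)

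So suppose $V_n(-K,K[n-1]) = n\vol_n(K)$. First, note that the infimum defining $q(K)$ is attained by compactness: take $t_j \downarrow q(K)$ with translations $x_j$. These translations are bounded, since $-K+x_j\subset t_jK$ forces $x_j$ to lie in a bounded set. Pass to a limit to get $-K+x\subset q(K)K$. Monotonicity and translation invariance then give
\[n\vol_n(K) = V_n(-K,K[n-1]) \le q(K)\vol_n(K) \le n\vol_n(K).\]
Because $\vol_n(K)>0$, this forces $q(K)=n$, and the equality characterization of $q$ then yields that $K$ is a simplex.

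The only nontrivial input is the equality case $q(K)=n \Rightarrow$ simplex, which we take from \cite{schneider2009stability} as the paper does. What remains to check is routine: attainment of the infimum (so that monotonicity applies directly), and the symmetry bookkeeping between $V_n(K,-K[n-1])$ and $V_n(-K,K[n-1])$. Strictly speaking, attainment is not even needed: the inequality $V_n(-K,K[n-1])\le t\vol_n(K)$ holds for every admissible $t$, so it passes to the infimum.
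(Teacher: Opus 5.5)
Your argument is correct and is the paper's argument: the monotonicity bound $V_n(-K,K[n-1])\le q(K)\vol_n(K)$, combined with $q(K)\le n$ and equality only for the simplex, forces $q(K)=n$. Your $K\leftrightarrow -K$ reduction is valid, but it is quicker to apply linear invariance (viii) with $A=-I$, which gives $V_n(K,-K[n-1])=V_n(-K,K[n-1])$ directly.
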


As in the proof of Godbersen's inequalities, the main tool in the proof of their higher-order generalization \eqref{eq:higher_godbersen} is the projection formula for mixed volumes:

\begin{lem}[{\cite[Theorem 5.3.1]{schneider2014book}}]
\label{lem:exact}
Let $1\leq d\leq N-1$, let $E\subset\R^N$ be a $d$-dimensional linear subspace, let $P_{E^\perp}$ denote the orthogonal projection onto $E^{\perp}$, and let $L_1,\dots,L_d,M_1,\dots,M_{N-d}\in \mathcal K^N$ with $L_i\subset E$ for each $i=1,\dots,d$. Then
\begin{equation}
\label{eq:projectionidentity}
\binom{N}{d} V_N(L_1,\dots,L_d,M_1,\dots,M_{N-d}) = V_E(L_1,\dots,L_d) V_{E^\perp}(P_{E^\perp} M_1,\dots,P_{E^\perp} M_{N-d}),
\end{equation}
where $V_E$ and $V_{E^\perp}$ denote the mixed volumes in $E$, $E^\perp$, respectively.
\end{lem}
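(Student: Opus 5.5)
This is the projection formula, Schneider's Theorem 5.3.1. I would prove it by multilinearity and density, reducing to a computation with Minkowski sums that uses only Fubini.

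\emph{Step 1: reduction to two bodies.} Both sides of \eqref{eq:projectionidentity} are multilinear in the $L_i$ and in the $M_j$, and both are symmetric under permuting the $L_i$ among themselves and the $M_j$ among themselves. For the right-hand side this holds because $P_{E^\perp}$ is linear, so $P_{E^\perp}(aM+bM') = aP_{E^\perp}M + bP_{E^\perp}M'$. By polarization it therefore suffices to treat $L_1=\dots=L_d=L \subset E$ and $M_1=\dots=M_{N-d}=M$. Concretely, one expands $\vol(\sum\lambda_i L_i+\sum\mu_j M_j)$ and compares coefficients of $\lambda_1\cdots\lambda_d\mu_1\cdots\mu_{N-d}$. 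Since $\sum\lambda_iL_i\subset E$, it is enough to prove the identity for a single body $L\subset E$ and a single body $M$.

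\emph{Step 2: computing $\vol_N(\lambda L+\mu M)$.} Write $x=(y,z)$ with $y\in E$ and $z\in E^\perp$. For $z\in \mu P_{E^\perp}M$, the fiber of $\lambda L+\mu M$ over $z$ is $\lambda L+F_z$, where $F_z$ is the fiber of $\mu M$ over $z$. By Fubini,
\[\vol_N(\lambda L+\mu M)=\int_{\mu P_{E^\perp}M}\vol_d(\lambda L+F_z)\,dz .\]
Expand $\vol_d(\lambda L+F_z)$ as a polynomial in $\lambda$ using mixed volumes in $E$. Its top coefficient is $\lambda^d\vol_d(L)$, and the lower-order terms carry powers of $\mu$ through the size of $F_z$.

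\emph{Step 3: extracting the coefficient.} The coefficient of $\lambda^d\mu^{N-d}$ in $\vol_N(\lambda L+\mu M)$ equals $\binom{N}{d}V_N(L[d],M[N-d])$. In the integral, the $\lambda^d$ term is $\vol_d(L)\,\vol_{N-d}(\mu P_{E^\perp}M)=\mu^{N-d}\vol_d(L)\vol_{N-d}(P_{E^\perp}M)$. Every other term has $\lambda$-degree less than $d$. Comparing coefficients gives the identity.

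\emph{Main obstacle.} The subtle point is justifying that the integral is a polynomial in $(\lambda,\mu)$, so that coefficients can be compared. Polynomiality of the left-hand side is the standard Minkowski theorem, which guarantees this. The $\lambda^d$-coefficient of the integral is exactly $\mu^{N-d}\vol_d(L)\vol_{N-d}(P_{E^\perp}M)$, and the remaining terms have lower $\lambda$-degree. Equating the $\lambda^d$ parts of the two expressions as polynomials in $\lambda$, for each fixed $\mu$, then yields the result.
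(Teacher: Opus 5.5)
Your argument is correct. The paper gives no proof of its own and simply cites Schneider's Theorem 5.3.1, and your route is the standard proof of that theorem: polarization to $L_1=\dots=L_d=L$, $M_1=\dots=M_{N-d}=M$, then Fubini over the fibres $\lambda L+F_z\subset E$, then comparison of the $\lambda^d$-coefficient with Minkowski's polynomial expansion. The ``main obstacle'' you flag is not really one. Integrating the finite expansion $\vol_d(\lambda L+F_z)=\sum_{i\le d}\binom{d}{i}\lambda^i V_E(L[i],F_z[d-i])$ term by term shows directly that, for fixed $\mu$, the integral is a polynomial in $\lambda$ of degree at most $d$ with $\lambda^d$-coefficient $\mu^{N-d}\vol_d(L)\vol_{N-d}(P_{E^\perp}M)$, and two polynomials agreeing for all $\lambda>0$ have equal coefficients.
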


We shall also need the Alexandrov-Fenchel inequality, with its equality case \cite[Theorem 7.4.2]{schneider2014book}:

\begin{thm}[Alexandrov-Fenchel inequality] \label{thm:af_equality}
Let $K,L,C_1,\dots,C_{n-2} \in \mathcal K^n$ be convex sets with $V_n(K,L,C_1,\dots,C_{n-2})>0$.
Then 
\[V_n(K,L,C_1,\dots,C_{n-2})^2 \ge 
V_n(K,K,C_1,\dots,C_{n-2})\,
V_n(L,L,C_1,\dots,C_{n-2}),\]
and moreover, the following are equivalent:
\begin{enumerate}
\item  $V_n(K,L,C_1,\dots,C_{n-2})^2=
V_n(K,K,C_1,\dots,C_{n-2})\,
V_n(L,L,C_1,\dots,C_{n-2})$.
\item There exists $\alpha > 0$ such that for any $M \in \mathcal K^n$, 
\[V_n(M,K,C_1,\dots,C_{n-2}) = \alpha V_n(M,L,C_1,\dots,C_{n-2}).\]
\end{enumerate}
\end{thm}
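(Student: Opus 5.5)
The inequality itself is the deep part of the statement: it is the classical Alexandrov--Fenchel inequality, which I would quote as is from \cite{schneider2014book} rather than reprove. What remains is the equivalence of (1) and (2), and the plan is to deduce it from the inequality alone by a first-order perturbation argument. Throughout, fix $C_1,\dots,C_{n-2}$ and abbreviate $V(X,Y)=V_n(X,Y,C_1,\dots,C_{n-2})$. By permutation invariance this is symmetric in $X,Y$, and by multilinearity it is bilinear under Minkowski combinations with nonnegative coefficients. Write
\[a=V(K,L),\quad A=V(K,K),\quad D=V(L,L).\]
By hypothesis $a>0$.

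For (2)$\Rightarrow$(1), take $M=K$ and then $M=L$ in (2). This gives $A=\alpha a$ and $a=\alpha D$. Hence $a^2=(A/\alpha)(\alpha D)=AD$, which is (1).

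For (1)$\Rightarrow$(2):
\begin{enumerate}
\item Since $a>0$ and $a^2=AD$, both $A$ and $D$ are positive. Set $\alpha=A/a=a/D>0$.
\item Fix an arbitrary $M\in\mathcal K^n$ and put $b=V(M,L)$, $c=V(M,K)$, $d=V(M,M)$.
\item For $t\ge 0$, apply the Alexandrov--Fenchel inequality to the pair $(K+tM,\,L)$. Multilinearity expands it to
\[(a+tb)^2\ \ge\ (A+2tc+t^2d)\,D .\]
Equality holds at $t=0$ by (1), so comparing the coefficients of $t$ (divide by $t$ and let $t\to0^+$) gives $ab\ge cD$, that is, $c\le \alpha b$.
\item Symmetrically, apply the inequality to the pair $(K,\,L+tM)$:
\[(a+tc)^2\ \ge\ A\,(D+2tb+t^2d).\]
The same first-order comparison gives $ac\ge Ab$, that is, $c\ge \alpha b$.
\item Together, $V(M,K)=c=\alpha b=\alpha V(M,L)$ for every $M$, which is (2).
\end{enumerate}

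The expected main obstacle is that only nonnegative Minkowski combinations are available: negative $t$ is not allowed, so a single perturbation yields only a one-sided inequality. Perturbing $K$ in one application and $L$ in the other is exactly what supplies the two opposite inequalities $c\le\alpha b$ and $c\ge\alpha b$.
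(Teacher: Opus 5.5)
Your proof is correct. The paper gives no proof of this statement: it quotes it as a background result from Schneider's book (Theorem 7.4.2). So your proposal is a self-contained derivation of (1)$\Leftrightarrow$(2) from the bare inequality, which is a genuinely different route from the usual one.

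The key steps all check out.
\begin{itemize}
\item Perturbing $K$ gives $ab\ge cD$, i.e.\ $c\le \alpha b$ with $\alpha=a/D$.
\item Perturbing $L$ gives $ac\ge Ab$, i.e.\ $c\ge \alpha b$ with $\alpha=A/a$.
\item These two values of $\alpha$ coincide precisely because of (1).
\item The inequality may legitimately be applied to the perturbed pairs, since $V(K+tM,L)=a+tb\ge a>0$ and $V(K,L+tM)=a+tc>0$.
\end{itemize}

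The standard alternative argument works differently.
\begin{itemize}
\item It first extends the inequality to differences of support functions.
\item It then observes that the symmetric form $V(\cdot,\cdot,C_1,\dots,C_{n-2})$ is negative semidefinite on the subspace where $V(\cdot,L,C_1,\dots,C_{n-2})=0$.
\item Since $g=h_K-\alpha h_L$ satisfies $V(g,L)=0$ and $V(g,g)=0$, Cauchy--Schwarz for semidefinite forms puts $g$ in the kernel of the form.
\item This also gives the equivalent formulation via proportionality of the mixed area measures $S(K,C_1,\dots,C_{n-2},\cdot)$ and $S(L,C_1,\dots,C_{n-2},\cdot)$.
\end{itemize}

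What that route buys is the hyperbolicity structure of the form and the measure-theoretic statement, at the cost of the extension step. Your argument needs nothing beyond the inequality on $\mathcal K^n$ and multilinearity with nonnegative coefficients. Using two one-sided perturbations, one in each slot, neatly compensates for the unavailability of negative $t$. It is also exactly what the paper needs, since the direction (1)$\Rightarrow$(2) is only ever applied to convex sets $M$ (namely $M=K^2$ and $M=L_{j+1}$ in Proposition \ref{prop:moreequalities-AFargument}).
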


\section{Proof of the higher-order Godbersen inequality}\label{sec:higher_godbersen}

Our goal in this section is to prove the following theorem. Recall that $\imath_1, \dots, \imath_p: \R^n \to \R^{pn}$ and $\Delta_p: \R^n \to \R^{pn}$ denote, respectively, the $p$ block coordinate embeddings and the diagonal embedding.

\begin{thm}\label{thm:higher_godbersen} For any $K \in \mathcal K^n$, one has \begin{equation}\label{eq:higher_godbersen_redux}
V_{pn}(-\Delta_p K[a_0], \imath_1 K[a_1], \dots, \imath_p K[a_p]) \le \binom{n}{n - a_0, \dots, n - a_p}  \frac{(n!)^p}{(pn)!} \vol_n(K)^p.
\end{equation}
\end{thm}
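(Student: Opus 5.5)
The plan is to follow the Rogers--Shephard/Kotrbat\'y--Mouamine strategy: realize the whole polynomial $\vol_{pn}\bigl(-\lambda_0\Delta_p K+\sum_{i=1}^p\lambda_i\imath_i K\bigr)$, whose coefficients are (up to the factor $(pn)!/\prod a_i!$) exactly the mixed volumes on the left of \eqref{eq:higher_godbersen_redux}, as the image of a product body under a linear map. We then bound that volume by a polynomial in $\lambda=(\lambda_0,\dots,\lambda_p)$ whose coefficients are the right-hand sides.

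\emph{Step 1 (the map).} Fix $\lambda_0,\dots,\lambda_p>0$ and consider $K^{p+1}\subset\R^{(p+1)n}$ together with the linear surjection
\[
T_\lambda(x_0,x_1,\dots,x_p)=(\lambda_1x_1-\lambda_0x_0,\dots,\lambda_px_p-\lambda_0x_0)\in\R^{pn}.
\]
Its image is $Q_\lambda=-\lambda_0\Delta_pK+\sum_i\lambda_i\imath_iK$. Each fibre of $T_\lambda$ over $z=(z_1,\dots,z_p)$ is, after a fixed linear normalization, the $n$-dimensional convex set
\[
F(z)=\{x_0\in K:\ \lambda_0x_0+z_i\in\lambda_iK\ \text{for all } i\},
\]
that is, $F(z)=K\cap\bigcap_i\lambda_0^{-1}(\lambda_iK-z_i)$. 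By Fubini,
\[
\prod_i\lambda_i^{n}\vol_n(K)^{p+1}=\int_{Q_\lambda}\vol_n(F(z))\,dz .
\]

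\emph{Step 2 (using the inclusion).} A crude Brunn--Minkowski bound on $z\mapsto\vol_n(F(z))^{1/n}$ would only give a Rogers--Shephard-type constant. The sharp multinomial constant requires expanding instead along the ``level'' structure. Translate $K$ so that the needed containment holds; the relevant one is of the form $-K+x\subset nK$, as in the definition of $q(K)$. Then write
\[
\vol_n(F(z))\ \ge\ \vol_n\Bigl(K\cap\bigcap_i(\text{scaled translate of }K)\Bigr).
\]
We bound this below by mixed volumes of $K$ and $-K$ via monotonicity. Equivalently, and this is the form I would actually run, apply the projection formula Lemma~\ref{lem:exact} with $E=\imath_j(\R^n)$ or $E=\Delta_p(\R^n)$ to peel off one block at a time. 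Each peel reduces a mixed volume in $\R^{pn}$ to a product of an $n$-dimensional mixed volume of $K$ and a mixed volume in $\R^{(p-1)n}$ of the same shape. This sets up an induction on $p$ whose base case $p=1$ is Godbersen's inequality, i.e.\ Theorem~\ref{thm:km_ineq}. The projections of $\imath_iK$ and $-\Delta_pK$ onto $E^\perp$ are again sets of the same type (blocks and a diagonal), up to linear maps of explicit determinant. By (viii), this is what produces the factors $(n!)^p/(pn)!$ and the multinomial coefficient.

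\emph{Step 3 (comparing coefficients).} Combining Steps 1--2 yields an inequality
\[
\vol_{pn}(Q_\lambda)\le\sum_{a}c_a\lambda^a\vol_n(K)^p,\qquad c_a=\frac{(pn)!}{\prod_i a_i!}\binom{n}{n-a_0,\dots,n-a_p}\frac{(n!)^p}{(pn)!}.
\]
This alone does not compare coefficients termwise. To isolate each coefficient, I would instead run the inclusion argument directly at the level of each mixed volume. By monotonicity (vii) applied in one argument at a time, replace a copy of $-\Delta_pK$ by a translate of a set contained in the span of the $\imath_iK$'s, mimicking $-K+x\subset nK$ (the case $k=1$). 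Then use Lemma~\ref{lem:exact} to evaluate, proceeding by downward induction on $a_0$.

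The main obstacle is exactly Step 2/3: finding the right inclusion in $\R^{pn}$ so that a single monotonicity step plus the projection formula gives the sharp multinomial constant, rather than only a bound on the total volume. The first thing I would try is the inclusion
\[
-\Delta_pK\subset\sum_{i=1}^p\imath_i(\text{appropriate homothet of }K)
\]
inside the hyperplane-type section
\[
\{(x_i):\ \textstyle\sum_i x_i\in(\text{const})K\},
\]
and check the constant on the simplex, where every step must be an equality.
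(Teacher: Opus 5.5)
There is a genuine gap. Steps 2--3 never actually produce the inequality, as you acknowledge (``the main obstacle is exactly Step 2/3''), and both concrete mechanisms you propose fail.

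\textbf{The peeling induction does not exist.} Lemma~\ref{lem:exact} needs exactly $\dim E$ of the bodies to lie in $E$. A proper subspace $E$ containing translates of the bodies of some set $S$ of the $p+1$ types contains the corresponding subspaces $\Delta_p\R^n$, $\imath_i\R^n$, so $\dim E\ge |S|n$. But $E$ contains only $\sum_{i\in S}a_i$ bodies, and this is $<|S|n$ unless $a_i=n$ for all $i\in S$. So in the only nontrivial case $1\le a_i\le n-1$ the mixed volume does not factor; this is why the $H_{a,b,c}$ of \S\ref{sec:godbersen_equality} are genuinely new quantities. Factorization happens only in the degenerate situation \eqref{eq:dimensiondown}.

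\textbf{The containment route is lossy.} Any containment $-\Delta_pK+x\subset\sum_i\imath_i(t_iK)$ forces $-K+x_i\subset t_iK$, i.e.\ $t_i\ge q(K)$, which equals $n$ for the simplex. Already for $p=1$, one replacement gives $V_n(-K[a_0],K[n-a_0])\le n\,V_n(-K[a_0-1],K[n-a_0+1])$. On the simplex the right side equals $n\binom{n}{a_0-1}\vol_n(K)$, which is $>\binom{n}{a_0}\vol_n(K)$ whenever $a_0\ge 2$. So the simplex test you propose fails at the first step.

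\textbf{Step 1 cannot isolate coefficients.} The Fubini identity only controls $\vol_{pn}(Q_\lambda)$, a positive combination of all the coefficients at once. As you note, it cannot give the coefficientwise bound.

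\textbf{The missing idea: add a block rather than remove one.} In $\R^{(p+1)n}$ consider
\[
\mathcal V_{a_0,\dots,a_p}=V_{(p+1)n}(\Delta_{p+1}K[n],\tilde\imath_0K[a_0],\dots,\tilde\imath_pK[a_p]).
\]
Use the linear map $A$ with $\det A=\pm1$, $A\Delta_{p+1}=-\tilde\imath_0$, $A\tilde\imath_0=-\Delta_{p+1}$ and $A\tilde\imath_k=\tilde\imath_k$ for $k\ge1$. It turns $\mathcal V_{a_0,\dots,a_p}$ into a mixed volume with exactly $n$ copies of $-\tilde\imath_0K$ in the $n$-dimensional space $\operatorname{Im}\tilde\imath_0$. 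Now the projection formula does apply, and gives $\mathcal V_{a_0,\dots,a_p}=\binom{(p+1)n}{n}^{-1}\vol_n(K)$ times your left-hand side. The sign flip is done by a linear map, not by a containment.

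The upper bound then comes from the single trivial inclusion $\Delta_{p+1}K\subset K^{p+1}=\sum_i\tilde\imath_iK$, applied to the $n$ diagonal copies. Expanding by multilinearity, property (v) kills every term except the one with $s_i=n-a_i$ for all $i$, because $\sum_i(n-a_i)=n$. This yields exactly the multinomial coefficient. The surviving term $V_{(p+1)n}(\tilde\imath_0K[n],\dots,\tilde\imath_pK[n])=\frac{(n!)^{p+1}}{((p+1)n)!}\vol_n(K)^{p+1}$ is computed by repeated use of the projection formula. No induction on $p$, no appeal to Godbersen's inequality, and no Minkowski-symmetry bound is needed.
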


\begin{proof} First of all, if $\vol_n(K) = 0$, the affine span of $K$ is not all of $\R^n$. In this case, after translation, all the bodies $-\Delta_p K, \imath_1 K, \dots, \imath_p K$ will lie in a proper affine subspace of $\R^{pn}$, so the left-hand side of \eqref{eq:higher_godbersen_redux} is $0$ as well. Thus we may assume that $\vol_n(K) > 0$.

As in \cite{KM2026}, we lift to a space with one more coordinate block. Let \[\tilde\imath_0, \dots, \tilde\imath_{p}: \R^n \to \R^{(p + 1) n}\]
be the $p + 1$ block coordinate embeddings of $\R^n$ into $\R^{(p + 1) n}$. (Note that in $\mathbb R^{(p + 1) n}$, we choose to index the coordinate blocks starting from $0$.) Also let $\Delta_{p + 1}: \R^n \to \R^{(p + 1) n}$ be the diagonal embedding. We consider the mixed volumes
\begin{equation}\label{eq:lifted_mv}
    \mathcal V_{a_0, \dots, a_p} := V_{(p + 1)n}(\Delta_{p + 1} K[n], \tilde\imath_0 K[a_0], \dots, \tilde\imath_p K[a_p]).
\end{equation}

Let $A \in \R^{(p + 1)n \times (p + 1) n}$ be the block matrix
\[A = \begin{pmatrix}
-I_n   & \\ 
-I_n   & I_n & \\
\vdots &     & \ddots & \\
-I_n   &     &        &  I_n
\end{pmatrix},\]
which has determinant $(-1)^n$ and satisfies
\begin{align*}
    A\Delta_{p + 1} &= -\tilde\imath_0 \\
    A\tilde\imath_0 &= -\Delta_{p + 1} \\
    A\tilde\imath_k &= \tilde\imath_k, \qquad k = 1, \dots, p.
\end{align*}

Then by linear invariance of mixed volumes (viii), we have
\begin{align*}
\mathcal V_{a_0, \dots, a_p} &= V_{(p + 1)n}(A\Delta_{p + 1} K[n], A\tilde\imath_0 K[a_0], A\tilde\imath_1 K[a_1], \dots, A\tilde\imath_p K[a_p]) \\
&= V_{(p + 1)n}(-\tilde\imath_0 K[n], -\Delta_{p + 1} K[a_0], \tilde\imath_1 K[a_1], \dots, \tilde\imath_p K[a_p]).
\end{align*}

Let $E = \operatorname{Im} \tilde\imath_0$ be the span of the first coordinate block in $\R^{(p + 1)n}$; then $-\tilde\imath_0 K \subset E$ and $\dim(E)=n$, so by Lemma \ref{lem:exact}, we have
\begin{equation}\label{eq:proa_pplus1}
\mathcal V_{a_0, \dots, a_p} = \binom{(p + 1) n}{n}^{-1} \vol_n(K) \cdot V_{E^\perp}(P_{E^\perp}(-\Delta_{p + 1} K)[a_0], P_{E^\perp} \tilde\imath_1 K[a_1], \dots, P_{E^\perp} \tilde\imath_p K[a_p]).
\end{equation}

Note that $E^{\perp}=\{0\} \times \R^{pn}$. Therefore $E^\perp$ can be identified with $\R^{pn}$ by simply forgetting the first $n$ coordinates. Under this identification, it is easy to see that 
\begin{align*}
    P_{E^\perp} \circ \Delta_{p + 1} &= \Delta_p \\
    P_{E^\perp} \circ \tilde\imath_k &= \imath_k, \quad k = 1, \dots, p.
\end{align*}
Hence, we obtain from \eqref{eq:proa_pplus1} that
\begin{equation}\label{eq:diag_mv}
\mathcal V_{a_0, \dots, a_p} = \binom{(p + 1) n}{n}^{-1} \vol_n(K) \cdot V_{pn}(-\Delta_p K[a_0], \imath_1 K[a_1], \dots, \imath_p K[a_p]),
\end{equation}
which, up to a binomial coefficient and the factor of $\vol_n(K)$, is precisely the left-hand-side of \eqref{eq:higher_godbersen_redux}.

Thus, it remains to bound $\mathcal V_{a_0, \dots, a_p}$ from above. Let 
\[K^{p + 1} = \overbrace{K \times K \times \cdots \times K}^{\text{$p + 1$ times}} = \tilde\imath_0 K + \cdots + \tilde \imath_p K.\]
Then clearly we have $\Delta_{p + 1} K \subset K^{p + 1}$, and so, returning to the original definition \eqref{eq:lifted_mv} of $\mathcal V_{a_0, \dots, a_p}$ and using monotonicity of mixed volumes, we have
\begin{align*}
    \mathcal V_{a_0, \dots, a_p} &= V_{(p + 1)n}(\Delta_{p + 1} K[n], \tilde\imath_0 K[a_0], \dots, \tilde\imath_p K[a_p]) \\
    &\le V_{(p + 1)n}(K^{p + 1}[n], \tilde\imath_0 K[a_0], \dots, \tilde\imath_p K[a_p]).
\end{align*}

We expand the right-hand side by multilinearity:
\begin{equation}\label{eq:mv_prod_expansion}
\mathcal V_{a_0, \dots, a_p} \le \underset{\substack{s_0, s_1, \dots, s_p \ge 0 \\ s_0 + \cdots + s_p = n}}\sum \binom{n}{s_0, \dots, s_p} V_{(p + 1)n}( \tilde\imath_0 K[a_0 + s_0], \tilde\imath_1 K[a_1 + s_1], \dots, \tilde\imath_p K[a_p + s_p]).
\end{equation}
For each $i = 0, \dots, p$, $\tilde\imath_j K$ is a convex set whose affine span has dimension $n$, so the only terms in the sum on the right-hand side of \eqref{eq:mv_prod_expansion} which can be non-zero are those for which $a_i + s_i \le n$ for all $i$, and hence satisfy 
$$\sum_{i = 0}^p s_i \leq \sum_{i=0}^p (n-a_i)=(p+1)n-pn=n.$$ 
Since $s_0 + \cdots + s_p =n$, we see that the only non-vanishing term corresponds to the case $s_i=n-a_i$ for all $i$. Thus we obtain

\begin{equation}\label{eq:mv_prod_reduction}
\mathcal V_{a_0, \dots, a_p} \le \binom{n}{n - a_0, \dots, n - a_p} V_{(p + 1)n}( \tilde\imath_0 K[n], \tilde\imath_1 K[n], \dots, \tilde\imath_p K[n]).
\end{equation}

On the right-hand side of \eqref{eq:mv_prod_reduction}, we have $p + 1$ bodies of dimension $n$ lying in mutually orthogonal subspaces, and so iteratively applying the projection formula for mixed volumes (Lemma \ref{lem:exact}) to each coordinate block, we see that the right-hand side equals
\[\binom{n}{n - a_0, \dots, n - a_p} \frac{(n!)^{p + 1}}{((p + 1)n)!} \vol_n(K)^{p + 1}.\]
Finally, combining the inequality
\[\mathcal V_{a_0, \dots, a_p} \le \binom{n}{n - a_0, \dots, n - a_p} \frac{(n!)^{p + 1}}{((p + 1)n)!} \vol_n(K)^{p + 1}\]
with the identity \eqref{eq:diag_mv}, we obtain
\begin{multline*}
\binom{n}{n - a_0, \dots, n - a_p}^{-1}  V_{pn}(-\Delta_p K[a_0], \imath_1 K[a_1], \dots, \imath_p K[a_p]) \\
\le \binom{(p + 1) n}{n} \vol_n(K)^{-1} \cdot \frac{(n!)^{p + 1}}{((p + 1)n)!} \vol_n(K)^{p + 1} = \frac{(n!)^p}{(pn)!} \vol_n(K)^p,
\end{multline*}
which is precisely \eqref{eq:higher_godbersen_redux}. The proof is complete.
\end{proof}

\section{Proof of Theorem \ref{thm:godbersen_equality} : equality case in Godbersen inequality}
\label{sec:godbersen_equality}
\label{section:godbersenclassique}

In this section, $K \in \mathcal K^n$ is a fixed convex body. In $\mathbb R^{2n}$, let $i_1, i_2, \Delta: \mathbb R^n \to \mathbb R^{2n}$ be the coordinate embeddings and diagonal map, respectively. For integers $a,b,c \geq 0$ with $a + b + c = 2n$, we denote $H_{a,b,c}= \binom{2n}{n} \mathcal V_{c,a,b}$, i.e.
\[H_{a,b,c}=\binom{2n}{n} V_{2n}(i_1 K[a],i_2 K[b], -\Delta K[c]).\]

The following is a special case of \cite[Remark 3.4]{K2025} (which says that $\mathcal V_{a_0, \dots, a_p}$ only depends on $\{a_0, \ldots, a_p\}$, not on their order); we prove it here for completeness.

\begin{lem}
    For any integers $a,b,c\geq 0$ summing to $2n$, one has $H_{a,b,c}=H_{b,a,c}=H_{a,c,b}$. In other words, $H_{a,b,c}$ only depends on $K$ and on $\{a,b,c\}$.
\end{lem}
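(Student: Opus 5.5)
The plan is to obtain both symmetries from the linear invariance property (viii) of mixed volumes, using suitable linear maps of $\R^{2n}$ with determinant $\pm 1$ that permute the three sets $i_1K$, $i_2K$, $-\Delta K$, possibly up to translation and reflection. Permutation invariance (vi) then lets us reorder the arguments.

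For $H_{a,b,c}=H_{b,a,c}$ we use the swap map $S(x,y)=(y,x)$. It has $|\det S|=1$ and satisfies $Si_1=i_2$, $Si_2=i_1$ and $S\Delta=\Delta$. Hence
\[V_{2n}(i_1K[a],i_2K[b],-\Delta K[c])=V_{2n}(i_2K[a],i_1K[b],-\Delta K[c]),\]
and by (vi) the right-hand side equals $H_{b,a,c}/\binom{2n}{n}$.

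For $H_{a,b,c}=H_{a,c,b}$ we need a map fixing $i_1K$ and interchanging $i_2K$ with $-\Delta K$. Take the block matrix
\[B=\begin{pmatrix} I_n & -I_n\\ 0 & -I_n\end{pmatrix},\qquad B(x,y)=(x-y,-y).\]
Its determinant is $(-1)^n$. We check the three sets:
\begin{itemize}
\item $Bi_1x=(x,0)=i_1x$, so $Bi_1K=i_1K$;
\item $B i_2 y=(-y,-y)=-\Delta y$, so $Bi_2K=-\Delta K$;
\item $B(-\Delta x)=B(-x,-x)=(0,x)=i_2 x$, so $B(-\Delta K)=i_2K$.
\end{itemize}
Applying (viii) gives $V_{2n}(i_1K[a],i_2K[b],-\Delta K[c])=V_{2n}(i_1K[a],-\Delta K[b],i_2K[c])$, and reordering by (vi) yields $H_{a,c,b}$. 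Since the transpositions $(12)$ and $(23)$ generate $S_3$, $H_{a,b,c}$ depends only on the multiset $\{a,b,c\}$.

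The only real step is finding the matrix $B$; the rest is a direct check. No translation invariance is needed, since $B$ maps the sets exactly onto one another.
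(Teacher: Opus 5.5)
Your proof is correct and is essentially the paper's own argument: the paper uses the same swap map $\pi_{1,2}(x,y)=(y,x)$ and the same block matrix $\left(\begin{smallmatrix} I_n & -I_n\\ 0 & -I_n\end{smallmatrix}\right)$, combined with linear invariance and permutation invariance of mixed volumes. Your closing remark that the transpositions $(12)$ and $(23)$ generate $S_3$ spells out the ``in other words'' part, which the paper leaves implicit.
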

\begin{proof} Consider the linear map $\pi_{1,2}$ given by $\pi_{1,2}(x,y)=(y,x)$. One has $\det \pi_{1, 2} = (-1)^n$, $\pi_{1, 2} \circ i_1 = i_2$, $\pi_{1, 2} \circ i_2 = i_1$ and $\pi_{1,2} \circ \Delta=\Delta$. So for any $a,b,c\ge 0$,
\begin{align*}
    V_{2n}(i_1 K[a],i_2 K[b],-\Delta K[c]) &=V_{2n}(\pi_{1,2}( i_1 K)[a],\pi_{1,2}(i_2 K)[b],\pi_{1,2}(-\Delta K)[c]) \\
    &=
    V_{2n}(i_2 K[a],i_1 K[b],-\Delta K[c]),
\end{align*}
i.e., $H_{a,b,c}=H_{b,a,c}$.

Similarly, let $T$ be the transformation with block matrix $\left(\begin{smallmatrix} I_n & -I_n \\ 0 & -I_n\end{smallmatrix}\right)$. We have $\det T = (-1)^n$, $T\circ i_1=i_1$, $T\circ i_2=-\Delta$ and $T\circ (-\Delta)=i_2$.
Therefore
\begin{align*}
    V_{2n}(i_1 K[a],i_2 K[b],-\Delta K[c]) &=V_{2n}(T(i_1 K)[a],T(i_2 K)[b],T(-\Delta K)[c]) \\
    &=V_{2n}(i_1K[a],-\Delta K[b],i_2 K[c]),
\end{align*}
and so $H_{a,b,c}=H_{a,c,b}$.
\end{proof}

Taking $p=2$ in Theorem \ref{thm:higher_godbersen}, we find that
\begin{equation}
\label{eqn:generalisedgodbersen}
 H_{a,b,c}\leq  \binom{n}{n-a,n-b,n-c} \vol_n(K)^2.
\end{equation}

\begin{prop}
\label{prop:moreequalities-AFargument}
Fix $1\leq k\leq n-1$. Assume $K\subset \R^n$ is a convex body satisfying 
\begin{equation}\label{eq:godbersen_equality_lifted}
V_{2n}(K^2[n],i_1 K[k],i_2 K[n-k])=V_{2n}(\Delta K[n],i_1 K[k],i_2 K[n-k]).
\end{equation}
Then for any $s \in \{1, \dots, n\}$ and any compact convex sets $L_1, \dots, L_{n-s} \in \mathcal K^{2n}$, one has
\begin{multline}\label{eq:godbersen_free}
V_{2n}(\Delta K[s], L_1,\dots, L_{n-s},i_1 K[k],i_2 K[n-k]) \\ 
= V_{2n}(K^2[s], L_1,\dots, L_{n-s},i_1 K[k],i_2 K[n-k]).
\end{multline}
\end{prop}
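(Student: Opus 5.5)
Set $C = (i_1 K[k], i_2 K[n-k])$; these are $n$ fixed bodies in $\R^{2n}$. The plan is a downward induction on $s$, from $s=n$ (which is the hypothesis \eqref{eq:godbersen_equality_lifted}) to smaller $s$. At each step we use the Alexandrov-Fenchel inequality (Theorem \ref{thm:af_equality}) together with monotonicity, since $\Delta K \subset K^2$.

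\textbf{Step 1: equalities with mixed copies.} For $0\le j\le n$ set
\[
W_j = V_{2n}(\Delta K[j], K^2[n-j], C).
\]
By monotonicity, $W_n \le W_{n-1}\le \dots \le W_0$. Fix the other $n-2$ bodies among $\Delta K, K^2$ together with $C$. Then Alexandrov-Fenchel gives $W_j^2 \ge W_{j-1}W_{j+1}$, so $j\mapsto \log W_j$ is concave.

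We need positivity first. $W_n = H_{k,n-k,n}$ up to a constant, and it equals $\binom{2n}{n}^{-1}\vol_n(K)\, V_n(-K[k],K[n-k])$ by \eqref{eq:diag_mv}. This is positive because $K$ is a body, so all $W_j>0$.

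Now use the hypothesis $W_n = W_0$ together with monotonicity. This forces $W_0=W_1=\dots=W_n$, with no concavity needed. In particular $W_{n-1}^2 = W_{n-2}W_n$ and similar identities hold. These are equality cases of Alexandrov-Fenchel with $K=\Delta K$, $L = K^2$ and reference bodies $C_i$ taken from the remaining copies.

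\textbf{Step 2: free one slot.} Apply Theorem \ref{thm:af_equality} to $W_{j}^2=W_{j-1}W_{j+1}$, written with first two entries $\Delta K, K^2$ and with $\Delta K[j-1], K^2[n-j-1], C$ as the $C_i$'s. We obtain $\alpha>0$ such that for every $M$,
\[
V(M,\Delta K,\Delta K[j-1],K^2[n-j-1],C)=\alpha V(M,K^2,\Delta K[j-1],K^2[n-j-1],C).
\]
Taking $M = K^2$ or $M=\Delta K$ and comparing with $W_j=W_{j+1}$ gives $\alpha=1$. Thus one slot may be replaced by an arbitrary $M$: for any $M$ and any split, the mixed volumes with $\Delta K$ or with $K^2$ in the remaining position agree. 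Combining over $j$ by telescoping, every configuration with one free body $L_1$ and $n-1$ slots filled by arbitrary choices from $\{\Delta K, K^2\}$ gives the same value. This is \eqref{eq:godbersen_free} for $s=n-1$.

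\textbf{Step 3: induct on the number of free slots.} Suppose that for all $L_1,\dots,L_{m}$ the value
\[
F(j) = V(\Delta K[j], K^2[n-m-j], L_1,\dots,L_m, C)
\]
is independent of $j$. We want to add a free body $L_{m+1}$ and conclude for $n-m-1\ge 1$ remaining slots.

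The same Alexandrov-Fenchel argument works with $C_i$ now including $L_1,\dots,L_m$. The equalities $F(j)^2 = F(j-1)F(j+1)$ hold because all terms are equal. Theorem \ref{thm:af_equality} then gives a proportionality in a free slot, and the constant $\alpha=1$ again, by testing $M$ in $\{\Delta K, K^2\}$.

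The one extra requirement is the positivity hypothesis $V>0$. When it fails for some choice of the $L_i$, we use monotonicity sandwiching: $0\le V(\Delta K,\dots)\le V(K^2,\dots)$, and we must still show equality. We handle this by approximation. Replace each $L_i$ by $L_i+\varepsilon B$, where $B$ is the unit ball. Positivity then holds, because $V(\Delta K[j], K^2[\cdot],B[m],C)>0$: $K^2$ is full-dimensional, and $\Delta K, i_1K, i_2K$ are $n$-dimensional with complementary directions. Finally let $\varepsilon\to0$, using continuity of mixed volumes.

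\textbf{Main obstacle.} The delicate point is bookkeeping the positivity condition of Theorem \ref{thm:af_equality} at each stage, and checking that the normalizing constant $\alpha$ equals $1$ rather than merely being positive. The first is handled by the $\varepsilon B$ perturbation and continuity. The second holds because both $\Delta K$ and $K^2$ remain among the admissible test bodies $M$ at every stage, as long as at least two non-free slots remain. Once only one non-free slot remains, that is, at $s=1$, the statement is just the conclusion of the last Alexandrov-Fenchel application with $M=L_{n-1}$, which finishes the proof for every $s\in\{1,\dots,n\}$.
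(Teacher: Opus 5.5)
Your proposal is correct and follows essentially the same argument as the paper. The paper also inducts on the number of free slots, gets the base case from monotonicity ($\Delta K\subset K^2$) plus the hypothesis, and frees one slot at a time via the Alexandrov--Fenchel equality characterization with $\alpha=1$ forced by testing $M=K^2$. It likewise secures positivity by taking the free bodies full-dimensional and concluding by continuity.
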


\begin{proof} To simplify the notation, let $\mathcal{C} = (i_1 K[k], i_2 K[n-k])$. We will prove by induction on $j \in \{0, \dots, n-1\}$ that for any compact convex sets $L_1, \dots, L_j \in \mathcal{K}^{2n}$, the mixed volume
\[M_r^{(j)}(L_1,\dots,L_j) := V_{2n}(L_1, \dots, L_j, \Delta K[n-j-r], K^2[r], \mathcal{C})\]
is independent of $r\in \{0,1, \dots, n-j\}$. In the proof, we may assume that $L_1, \ldots, L_j$ are full-dimensional, which ensures that the $M_r^{(j)}$ are strictly positive; the general case follows by approximation and by continuity of mixed volumes.

When $j=0$, we simply write $M_r^{(0)} = M_r$. Since $\Delta K \subset K^2$, monotonicity of mixed volumes yields
\[M_0 \leq M_1 \leq \dots \leq M_{n-1} \leq M_n.\]
Our assumption is that $M_0 = M_n$, which implies that $M_0 = M_1 = \dots = M_n$. This is the case $j=0$ of our claim.

Assume now that the claim holds for some integer $0 \leq j \leq n-2$. Let $L_1, \dots, L_j \in \mathcal{K}^{2n}$ be fixed, and write $M_r^{(j)}=V_{2n}(L_1, \dots,L_j,\Delta K[n-j-r], K^2[r],\mathcal{C})$. By the induction hypothesis, $M_0^{(j)} = \dots = M_{n-j}^{(j)}$. In particular, for any $1 \leq r \leq n-j-1$, we have equality in the Alexandrov-Fenchel inequality
$(M_r^{(j)})^2 \geq M_{r-1}^{(j)} M_{r+1}^{(j)}.$
By Theorem \ref{thm:af_equality}, this equality implies that there exists a constant $\alpha > 0$ such that for any convex body $M \in \mathcal{K}^{2n}$,
\begin{multline*}
    V_{2n}(M, L_1, \dots, L_j, \Delta K[n-j-r], K^2[r-1], \mathcal{C}) \\ 
    = \alpha V_{2n}(M, L_1, \dots, L_j, \Delta K[n-j-r-1], K^2[r], \mathcal{C}).
\end{multline*}
Choosing $M = K^2$ gives $M_r^{(j)} = \alpha M_{r+1}^{(j)}$. Since $M_r^{(j)} = M_{r+1}^{(j)}$, we deduce that $\alpha = 1$. Consequently, for any arbitrary compact convex set $L_{j+1} \in \mathcal{K}^{2n}$, taking $M = L_{j+1}$ in the above equation yields:
\[M_{r-1}^{(j+1)} (L_1,\dots,L_{j+1}) = M_r^{(j+1)}(L_1,\dots,L_{j+1})\]
for all $1 \leq r \leq n-j-1$. This proves that the sequence $(M_r^{(j+1)})_{0 \leq r \leq n-j-1}$ is constant, completing the induction step.
\end{proof}

Examining the proof of \cite[Theorem 1.1]{KM2026}, one sees that equality in Godbersen's inequality \eqref{eq:Godbersen-ineq} is equivalent to the assumption \eqref{eq:godbersen_equality_lifted} of Proposition \ref{prop:moreequalities-AFargument}, and hence, if $V_n(K[k], -K[n - k]) = \binom{n}{k} \vol_n(K)$ then $K$ satisfies \eqref{eq:godbersen_free}.

For any $s \in \{0, \dots, n\}$, let
\[\alpha_s = V_n(K[n - s], -K[s]).\]
Note that $\alpha_s = \alpha_{n - s}$ for any $s$. We also set $v = \alpha_0 = \vol(K)$.

\begin{cor}
    Let $1\leq k\leq n-1$. Assume $K\subset \R^n$ is a convex body satisfying 
    \[\alpha_k =V_n(K[n-k],-K[k])=\binom{n}{k} \vol_n(K).\]
    Then for any $s\in \{1,\dots,n-1\}$, one has
\begin{equation}
\label{eqn:akasidentity}
\alpha_s \alpha_k=\sum_{i=0}^s \binom{s}{i}  H_{k+i, n-k+s-i,n-s}.
\end{equation}
\end{cor}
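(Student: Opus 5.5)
The plan is to apply Proposition \ref{prop:moreequalities-AFargument} with $n-s$ copies of the single free body $-\Delta K$, and then to evaluate the two sides of \eqref{eq:godbersen_free} separately. On one side, multilinearity turns the expression into the $H$'s. On the other side, the projection formula (Lemma \ref{lem:exact}) applied to the diagonal subspace gives $\alpha_s\alpha_k$.

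First, as noted after Proposition \ref{prop:moreequalities-AFargument}, the hypothesis $\alpha_k=\binom nk\vol_n(K)$ is exactly the equality case of Godbersen's inequality. So \eqref{eq:godbersen_equality_lifted} holds, and hence \eqref{eq:godbersen_free} holds. I fix $s\in\{1,\dots,n-1\}$ and take $L_1=\dots=L_{n-s}=-\Delta K$ in \eqref{eq:godbersen_free}, which gives
\[
V_{2n}(\Delta K[s],-\Delta K[n-s],i_1K[k],i_2K[n-k]) = V_{2n}(K^2[s],-\Delta K[n-s],i_1K[k],i_2K[n-k]).
\]
For the right-hand side, I write $K^2=i_1K+i_2K$ and expand by multilinearity. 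This gives
\[
\sum_{i=0}^s\binom si V_{2n}(i_1K[k+i],i_2K[n-k+s-i],-\Delta K[n-s]) = \binom{2n}{n}^{-1}\sum_{i=0}^s\binom si H_{k+i,n-k+s-i,n-s},
\]
which is the claimed right-hand side up to the factor $\binom{2n}n^{-1}$.

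For the left-hand side, I use Lemma \ref{lem:exact} with $E=D=\operatorname{Im}\Delta$, the $n$-dimensional diagonal subspace. Both $\Delta K$ and $-\Delta K$ lie in $D$, and there are exactly $s+(n-s)=n$ of them. The map $\Delta$ is $\sqrt2$ times an isometry $\R^n\to D$, so
\[
V_D(\Delta K[s],-\Delta K[n-s])=2^{n/2}V_n(K[s],-K[n-s])=2^{n/2}\alpha_{n-s}=2^{n/2}\alpha_s .
\]
Next, $D^\perp=\{(x,-x)\}$, and the projections are $P_{D^\perp}i_1(x)=\tfrac12(x,-x)$ and $P_{D^\perp}i_2(x)=-\tfrac12(x,-x)$. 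Thus $P_{D^\perp}i_1$ is $2^{-1/2}$ times an isometry $\phi:\R^n\to D^\perp$, and $P_{D^\perp}i_2K=\phi(-K)/\sqrt2$. Therefore
\[
V_{D^\perp}(P i_1K[k],P i_2K[n-k])=2^{-n/2}V_n(K[k],-K[n-k])=2^{-n/2}\alpha_k,
\]
using $\alpha_{n-k}=\alpha_k$. The scaling factors cancel, so the left-hand side equals $\binom{2n}n^{-1}\alpha_s\alpha_k$. Multiplying both sides by $\binom{2n}n$ yields \eqref{eqn:akasidentity}.

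The main point needing care is the bookkeeping of the scaling factors and signs in the projection onto $D$ and $D^\perp$: the $2^{\pm n/2}$ factors must cancel, and the sign flip of $P_{D^\perp}i_2$ must produce $-K$. Everything else is a direct application of earlier results. The choice $s\le n-1$ guarantees at least one free slot, so Proposition \ref{prop:moreequalities-AFargument} applies (the case $s=n$ would be trivial anyway).
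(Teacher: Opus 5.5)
Your proposal is correct and follows essentially the same route as the paper. You take $L_1=\dots=L_{n-s}=-\Delta K$ in Proposition \ref{prop:moreequalities-AFargument} and evaluate the left side by the projection formula onto the diagonal subspace, where your $2^{\pm n/2}$ factors cancel just as in the paper's use of $|\det\Delta|=|\det\tilde\Delta|=2^{n/2}$; you then expand the right side by multilinearity via $K^2=i_1K+i_2K$.
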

\begin{proof}
Applying Proposition \ref{prop:moreequalities-AFargument} with $L_1=\dots=L_{n-s}=-\Delta K$ yields
 \begin{multline}
 \label{eqn:equalitywiths}
 \binom{2n}{n} V_{2n}(\Delta K[s], -\Delta K[n-s],i_1 K[k],i_2 K[n-k]) \\ 
= \binom{2n}{n} V_{2n}(K^2[s], -\Delta K[n-s],i_1 K[k],i_2 K[n-k]).
 \end{multline}
 
Let $E = \Delta \mathbb R^n$, and denote $\tilde{\Delta}(x)=(x,-x)$ so $\tilde{\Delta}(\R^n)=E^{\perp}$. By Lemma \ref{lem:exact}, the LHS rewrites
 \begin{align}\label{eq:equalitywiths_lhs}
 & \binom{2n}{n} V_{2n}(\Delta K[s], -\Delta K[n-s],i_1 K[k],i_2 K[n-k]) \nonumber \\
 &\qquad = V_E(\Delta K[s],-\Delta K[n-s]) V_{E^{\perp}}((P_{E^\perp}\circ i_1)(K)[k],(P_{E^\perp}\circ i_2)(K)[n-k]) \nonumber \\
 &\qquad =2^{-n}V_E(\Delta K[s],-\Delta K[n-s])V_{E^{\perp}}(\tilde{\Delta}(K)[k],\tilde{\Delta}(-K)[n-k]) \nonumber  \\
 &\qquad = \alpha_s \alpha_k,
 \end{align}
 where we used that $P_{E^\perp} \circ i_1=\frac{1}{2}\tilde{\Delta}$, $P_{E^\perp} \circ i_2=-\frac{1}{2}\tilde{\Delta}$ along with homogeneity of mixed volumes for the second equality, and that $|\det(\Delta)|=|\det(\tilde{\Delta})|=2^{n/2}$ for the last equality.
By multilinearity, since $K^2=i_1K+i_2 K$, the RHS of \eqref{eqn:equalitywiths} rewrites as
 \begin{equation} 
 \label{eq:equalitywiths_rhs}
 \binom{2n}{n}V_{2n}(K^2[s],-\Delta K[n-s],i_1K[k],i_2K[n-k])=\sum_{i=0}^s \binom{s}{i} H_{k+i,n-k+s-i, n-s}
 \end{equation}
 where we recall the notation $H_{a,b,c}=\binom{2n}{n}V_{2n}(i_1 K[a],i_2 K[b], -\Delta K[c])$. Combining \eqref{eqn:equalitywiths}, \eqref{eq:equalitywiths_lhs}, and \eqref{eq:equalitywiths_rhs} gives \eqref{eqn:akasidentity}.
\end{proof}

We are ready for the proof of Theorem \ref{thm:godbersen_equality}.

\begin{proof}[Proof of Theorem \ref{thm:godbersen_equality}]
Let $1\leq k\leq n-1$, and assume that $K\in \mathcal{K}^n$ is a convex body such that $V_n(K[n-k],-K[k])=  \binom{n}{k} \vol_n(K)$, i.e., $\alpha_k=\binom{n}{k} v$. To show that $K$ must be a simplex, we will show that $\alpha_1=n v$.

Taking $s=k$ in \eqref{eqn:akasidentity}, and using \eqref{eqn:generalisedgodbersen} with $(a,b,c)=(k+i,n-i,n-k)$, $i\in \{0,\dots, k\}$, one finds
\begin{align*} 
\alpha_k^2=\sum_{i=0}^k  \binom{k}{i} H_{k+i,n-i,n-k} &\leq \sum_{i=0}^k \binom{k}{k - i} \binom{n}{n-k-i,i,k} v^2 \\ 
&=\binom{n}{k} \sum_{i=0}^k \binom{k}{k-i}\binom{n-k}{i} v = \binom{n}{k}^2 v^2= \alpha_k^2,
\end{align*}
where the last equality is due to our assumption $\alpha_k = \binom{n}{k} v$. Therefore we must have $H_{a,b,c}= \binom{n}{n-a,n-b,n-c} v^2$ for any $\{a,b,c\}\in \{\{k+i,n-i,n-k\}; 0\leq i\leq k\}$. In particular, taking $i=1$ yields 
\begin{equation}
    \label{eqn:eli}
    H_{k+1,n-k,n-1}=\binom{n}{n-k-1,k,1} v^2 =(n-k)\binom{n}{k} v^2.
\end{equation}

Similarly, applying \eqref{eqn:akasidentity} with $k$ replaced by $n - k$ (which is valid since $\alpha_k = \alpha_{n - k}$), taking $s = n - k$, and using \eqref{eqn:generalisedgodbersen}, one finds
\begin{align*}\alpha_{n-k}^2 = \sum_{i=0}^{n-k} \binom{n-k}{i} H_{n-k+i,n-i,k} &\leq \sum_{i=0}^{n-k} \binom{n-k}{i} \binom{n}{k-i,i,n-k} v^2 \\
&= \binom{n}{k} \sum_{i=0}^{n-k} \binom{n-k}{i}\binom{k}{k-i} v^2=\binom{n}{k}^2 v^2 = \alpha_{n-k}^2.
\end{align*}
Thus the inequalities $H_{a,b,c}\leq \binom{n}{n-a,n-b,n-c} v^2$ must be equalities for 
\[\{a,b,c\}\in \{\{n-k+i,k,n-i\} ; 0\leq i\leq n-k\}.\]
In particular, choosing $i=1$ we get
\begin{equation}
\label{eqn:maud}
H_{k,n-k+1,n-1} = \binom{n}{n-k, k-1, 1} v^2 = k\binom{n}{k} v^2.
\end{equation}

Now, taking $s=1$ in \eqref{eqn:akasidentity} yields
\begin{align*}
    \alpha_1 \cdot \alpha_k  &=H_{k,n-k+1,n-1}+H_{k+1,n-k,n-1} \\
    &=k \binom{n}{k}v^2 +(n-k)\binom{n}{k} v^2 \\
    &= n \binom{n}{k} v^2 = n v\cdot \alpha_k,
\end{align*}
where we used \eqref{eqn:eli} and \eqref{eqn:maud} for the second equality, and the assumption that $\alpha_k=\binom{n}{k} v$ for the last equality. Dividing through by $\alpha_k>0$, we obtain $\alpha_1 = n v$, i.e., $V_n(K[n - 1], -K) = n \vol_n(K)$, from which we deduce by Proposition \ref{prop:godbersen_j=1_equality} that $K$ is a simplex.
\end{proof}

\section{Equality in the higher order Godbersen inequality}\label{sec:higher_godbersen_equality}

To characterize the equality case in Godbersen's inequality \eqref{eq:Godbersen-ineq}, which corresponds to the case $p=1$ of (the equality case in) the   inequality \eqref{eq:higher_godbersen}, we passed to $\R^{2n}$ and used inequalities for $H_{a, b, c}$ (namely the case $p = 2$ of Theorem \ref{thm:higher_godbersen}).
In this section, we will use a similar, though slightly more complicated, scheme of proof to prove Theorem \ref{thm:higher_godbersen_equality}. 
 
Recall that the proof of the higher-order Godbersen inequality (Theorem \ref{thm:higher_godbersen}), which involves $V_{pn}(-\Delta_p K[a_0], \ldots,i_p K[a_p])$, uses a lifting of this mixed volume to $\R^{(p+1)n}$ and a monotonicity inequality coming from $\Delta_p K\subset K^{p+1}$. To prove the equality case of Conjecture \ref{conj:higher_godbersen}, we shall use a monotonicity inequality coming from a similar inclusion, in $\R^{2pn}$, and use the inequalities of Theorem \ref{thm:higher_godbersen} with $p'=2p$ to show that equality in \eqref{eq:higher_godbersen_redux} forces equality of certain order-$2p$ mixed volumes. We then use these inequalities, along with  Proposition \ref{prop:moreequalities-AFargument}, to conclude, just as in \S \ref{sec:godbersen_equality}, that equality holds in Godbersen's inequality $V(-K, K[n - 1])\leq n \vol_n(K)$ as well.

\vspace{2mm}
For any $q\geq 1$, and any integers $c_0, \ldots,c_q\geq 0$ such that $c_0 + \ldots +c_q=qn$, let us denote 
\begin{align*}
    v^{(q)}_{c_0, \ldots,c_q}
    &= \binom{qn}{n,\ldots,n} V_{qn}(-\Delta_q K[c_0], i_1 K[c_1], \ldots,i_q K[c_q]) \\
    &=\frac{(qn)!}{(n!)^q}V_{qn}(-\Delta_q K[c_0], i_1 K[c_1], \ldots,i_q K[c_q]).
\end{align*}
For instance, in the notation of the preceding section, taking $q=1$, $c_0=k$ and $c_1 = n - k$ yields \[v^{(1)}_{k,n-k}=V_n(K[n-k],-K[k]) = \alpha_k,\]
while taking $q=2$ and $a,b,c\geq 0$ summing to $2n$ yields $v^{(2)}_{a,b,c} = H_{a,b,c}$. 

By our choice of normalization for the $v^{(q)}_{c_0, \ldots,c_q}$, it follows immediately from Lemma \ref{lem:exact} that for any $q\geq 2$, $1\leq k\leq q$, and any $c_i\geq 0$ such that $c_{k+1}= \ldots =c_q=n$ and $c_0+ \ldots +c_k=kn$, one has
\begin{equation}
\label{eq:dimensiondown}
v^{(q)}_{c_0, \ldots,c_q}=v^{(k)}_{c_0, \ldots,c_k} \cdot v^{q-k},
\end{equation}
where we write $v=\vol_n(K)$ as before.

In this notation, the statement of Theorem \ref{thm:higher_godbersen_equality} is as follows: if $K \subset \mathbb R^n$ is a convex body, $p \ge 1$, and $a_0, \ldots, a_p \ge 1$ are such that 
\begin{equation}\label{eq:higher_godbersen_normalized}
v^{(p)}_{a_0, \ldots, a_p} = \binom{n}{n - a_0, \ldots, n - a_p} v^p,
\end{equation}
then $K$ is an $n$-simplex. In proving this, we may assume that $1\leq a_0, \ldots,a_p \leq n-1$. Otherwise, say for instance $a_p=n$, one has 
\[v^{(p)}_{a_0, \ldots,a_{p-1},n}=v^{(p-1)}_{a_0, \ldots,a_{p-1}} \cdot v \quad \text{ and} \qquad \binom{n}{n-a_0, \ldots,n-a_p} = \binom{n}{n-a_0, \ldots,n-a_{p-1}},\]
so the assumption 
\[v^{(p)}_{a_0, \ldots,a_p}=\binom{n}{n-a_0, \ldots,n-a_p} \cdot v^p\]
reduces to
\[v^{(p-1)}_{a_0, \ldots,a_{p-1}}=\binom{n}{n - a_0, \ldots, n - a_{p - 1}} \cdot v^{p-1},\] which by induction on $p$ implies that $K$ must be an $n$-simplex. 

We now begin the proof that \eqref{eq:higher_godbersen_normalized} implies that $K$ is an $n$-simplex. Let $\Delta_p, \Delta_{2p}$ be the diagonal embeddings of $\mathbb R^n$ into $\mathbb R^{pn}$ and $\mathbb R^{2pn}$, respectively. Let $i_1, \ldots,i_p,j_1, \ldots, j_p$ denote the $2p$ block coordinate embeddings of $\R^{n}$ into $\R^{2pn}$, (i.e., the image of $j_\ell$ consists of vectors supported on the $(p+\ell)$-th coordinate block). Let 
\begin{align*}
E_0 &=i_1 \R^n + \ldots +i_p \R^n=\R^{pn}\times \{0\} \\
E_1 &= j_1 \R^n + \cdots + j_p \R^n = \{0\}\times \R^{pn}=E_0^{\perp}
\end{align*} 
denote the two coordinate copies of $\mathbb R^{pn}$ embedded into $\mathbb R^{2pn}$, and let 
\[\bar{j}_i: \mathbb R^{pn} \to E_i \subset \mathbb R^{2pn},\quad i = 0, 1,\]
be the corresponding embeddings. Finally, denote 
$\Delta_{p, i}(x) = \bar{j}_i \circ \Delta_p: \mathbb R^n \to \mathbb R^{2pn},$
i.e.
\begin{align*}
    \Delta_{p,0}(x) &= (x, \ldots,x,0, \ldots,0) \quad \text{ and } \quad 
    \Delta_{p,1}(x)= (0, \ldots,0,x, \ldots,x).
\end{align*} 
Note that $\Delta_{2p}=\Delta_{p,0}+\Delta_{p,1}$ and so $P_{E_1}\circ \Delta_{2p}=\Delta_{p,1}$.

Observe that
\begin{equation}
\label{eq:inclusionp0}
-\Delta_{p,0} K\subset -\Delta_{2p} K +j_1 K+ \ldots +j_p K =: L.
\end{equation}

Therefore, by monotonicity of mixed volumes, we have
\begin{multline}
\label{eq:step0higherequality}
V_{2pn}(-\Delta_{p,0} K[a_0], i_1 K[a_1], \ldots,i_p K[a_p], -\Delta_{2p} K[a_0], j_1 K[a_1],  \ldots  , j_p K[a_p]) \\ 
\leq V_{2pn}(L[a_0], i_1 K[a_1], \ldots,i_p K[a_p], -\Delta_{2p} K[a_0], j_1 K[a_1],  \ldots  , j_p K[a_p]).
\end{multline}
Note that $i_\ell K \subset E_0$ for $1\leq \ell \leq p$ and that $\Delta_{p,0} K \subset E_0$. Using Lemma \ref{lem:exact} with $N=2pn, d=pn$ and $(L_1, \ldots,L_d)=(-\Delta_{p,0} K[a_0], i_1 K[a_1], \ldots,i_p K[a_p])$, and recalling that $P_{E_1} \Delta_{2p}=\Delta_{p,1}$, we may rewrite the left-hand side of \eqref{eq:step0higherequality} as follows:
\begin{multline}
V_{2pn}(-\Delta_{p,0} K[a_0], i_1 K[a_1], \ldots,i_p K[a_p], -\Delta_{2p} K[a_0], j_1 K[a_1],  \ldots  , j_p K[a_p]) \\ = \frac{(pn)!^2}{(2pn)!} \frac{(n!)^{2p}}{(pn)!^2}\left(v^{(p)}_{a_0, \ldots,a_p}\right)^2 =\binom{2pn}{n, \ldots,n}^{-1} \left(v^{(p)}_{a_0, \ldots,a_p}\right)^2.
\end{multline}
On the other hand, since $L =-\Delta_{2p} K+j_1 K+ \ldots +j_p K$, by multilinearity we can rewrite the right-hand side of  \eqref{eq:step0higherequality} as:
\begin{align*}
&V_{2pn}(L[a_0], i_1 K[a_1], \ldots,i_p K[a_p], -\Delta_{2p} K[a_0], j_1 K[a_1],  \ldots  , j_p K[a_p]) \\
&\quad= \sum_{\substack{s_0+ \ldots +s_p=a_0 \\ s_0, \ldots,s_p \ge 0}} \binom{a_0}{s_0, \ldots,s_p} V_{2pn}(-\Delta_{2p} K[a_0+s_0],j_1 K[a_1+s_1], \ldots, j_p K[a_p+s_p], i_1 K[a_1], \ldots,i_p K[a_p]) \\
&\quad= \binom{2pn}{n, \ldots,n}^{-1} \sum_{\substack{s_0+ \ldots +s_p=a_0 \\ s_0, \ldots,s_p \ge 0}}  \binom{a_0}{s_0, \ldots,s_p} v^{(2p)}_{a_0+s_0, \ldots,a_p+s_p,a_1, \ldots,a_p}.
\end{align*}
Rewriting \eqref{eq:step0higherequality} using the above and applying the higher-order Godbersen inequality \eqref{eq:higher_godbersen}, we obtain
\begin{align*}
\left(v^{(p)}_{a_0, \ldots,a_p}\right)^2 & \leq \sum_{\substack{s_0+ \ldots +s_p=a_0 \\ s_0, \ldots,s_p \ge 0}}  \binom{a_0}{s_0, \ldots,s_p} v^{(2p)}_{a_0+s_0, \ldots,a_p+s_p,a_1, \ldots,a_p} \\
& \leq \sum_{\substack{s_0+ \ldots +s_p=a_0 \\ s_0, \ldots,s_p \ge 0}}  \binom{a_0}{s_0, \ldots,s_p} \binom{n}{n-a_1, \ldots,n-a_p,n-a_0-s_0, \ldots,n-a_p-s_p} v^{2p} \\
& = \binom{n}{a_0} \binom{a_0}{n-a_1, \ldots,n-a_p}\sum_{\substack{s_0+ \ldots +s_p=a_0 \\ s_0, \ldots,s_p \ge 0}}  \binom{a_0}{s_0, \ldots,s_p}
 \binom{n-a_0}{n-a_0-s_0, \ldots,n-a_p-s_p} v^{2p} \\
 &= \binom{n}{a_0} \binom{a_0}{n-a_1, \ldots,n-a_p} \binom{n}{n-a_0, \ldots,n-a_p} v^{2p} =\left( \binom{n}{n-a_0, \ldots,n-a_p} v^p \right)^2,
\end{align*}
where the first equality comes from the identity
$$\binom{n}{k_1, \ldots,k_p, b_1, \ldots,b_m}=\binom{n}{k}\binom{k}{k_1, \ldots,k_p}\binom{n-k}{b_1, \ldots,b_m},$$
for $k_1+ \ldots +k_p=k$ and $b_1+ \ldots +b_m=n-k$, and the second equality is a multinomial Vandermonde identity.

Since we are assuming that $v^{(p)}_{a_0, \ldots,a_p}=\binom{n}{n-a_0, \ldots,n-a_p} v^p$, we deduce that for any $s_0, \ldots,s_p\geq 0$ such that $s_0+ \ldots +s_p=a_0$, we have 
\begin{equation}
\label{eq:tobeused0}
v^{(2p)}_{a_0+s_0, \ldots,a_p+s_p,a_1, \ldots,a_p}= \binom{n}{n-a_1, \ldots,n-a_p,n-a_0-s_0, \ldots,n-a_p-s_p} v^{2p}.
\end{equation} 
By symmetry in $(a_0, \ldots,a_p)$, we also get that, for any $1\leq \ell \leq p$,
\begin{multline}
\label{eq:tobeusedell}
v^{(2p)}_{a_0+s_0, \ldots,a_p+s_p,a_0, \ldots,a_{\ell-1},a_{\ell+1},\ldots, a_p} \\
= \binom{n}{n-a_0, \ldots n-a_{\ell-1},n-a_{\ell+1},  \ldots  ,n-a_p,n-a_0-s_0, \ldots,n-a_p-s_p} v^{2p}
\end{multline}
whenever $s_0, \ldots, s_p\geq 0$ have sum $s_0+ \ldots +s_p=a_{\ell}$.

From here on, we adopt the notations of the proof of Theorem \ref{thm:higher_godbersen}: let $\tilde\imath_0, \ldots, \tilde\imath_p$ denote the $(p+1)$ block coordinate embeddings from $\R^n$ to $\R^{(p+1)n}$, and $\Delta_{p + 1}$ denotes the diagonal embedding. Recall from the proof of Theorem \ref{thm:higher_godbersen} that the assumption $v^{(p)}_{a_0, \ldots,a_p}=\binom{n}{n-a_0, \ldots,n-a_p} v^p$ is equivalent to assuming 
\[V_{(p+1)n}(\Delta_{p + 1} K[n], \tilde\imath_0 K[a_0], \ldots,\tilde\imath_pK[a_p])=V_{(p+1)n}(K^{p+1}[n], \tilde\imath_0 K[a_0], \ldots,\tilde\imath_pK[a_p]).\]
By Proposition \ref{prop:moreequalities-AFargument} , this implies that for each $s\in \{1,  \ldots, n\}$ and any $L_1, \ldots, L_{n-s}\in \mathcal{K}^{(p+1)n}$, one has
\begin{multline}
\label{eqn:afterAFtrick}
V_{(p+1)n}(\Delta_{p + 1} K[s], L_1, \ldots,L_{n-s}, \tilde\imath_0 K[a_0], \ldots,\tilde\imath_pK[a_p]) \\ 
= V_{(p+1)n}(K^{p+1}[s],L_1, \ldots,L_{n-s}, \tilde\imath_0 K[a_0], \ldots,\tilde\imath_pK[a_p]).
\end{multline}
In particular, taking $s=1$ and $L_j=-\Delta_{p + 1} K$ in \eqref{eqn:afterAFtrick}, our assumption implies that 
\begin{multline}
\label{eq:csqcewith1}
\frac{((p+1)n)!}{(n!)^{p+1}}V_{(p+1)n}(\Delta_{p + 1} K, -\Delta_{p + 1} K[n-1], \tilde\imath_0 K[a_0], \ldots,\tilde\imath_pK[a_p]) \\ 
= \frac{((p+1)n)!}{(n!)^{p+1}}V_{(p+1)n}(K^{p+1},-\Delta K[n-1], \tilde\imath_0 K[a_0], \ldots,\tilde\imath_pK[a_p]).
\end{multline}
By multilinearity (noting that $(n - 1) + a_0 + \cdots + a_p = (p + 1)n - 1$), the right-hand side of \eqref{eq:csqcewith1} expands as
\[\text{RHS} =\sum_{\ell=0}^p v^{(p+1)}_{n-1,a_0, \ldots, a_{\ell-1},a_{\ell}+1, a_{\ell+1}, \ldots,a_p},\]
while by the projection identity \eqref{eq:projectionidentity}, the left-hand side of \eqref{eq:csqcewith1} equals
\[\text{LHS}=\frac{((p+1)n)!}{(n!)^{p+1}} \binom{(p+1)n}{n}^{-1}\alpha_1 \cdot \frac{(n!)^p}{(pn)!} v^{(p)}_{a_0, \ldots,a_p}=\alpha_1 \cdot v^{(p)}_{a_0, \ldots,a_p}, \]
where we recall the notation $\alpha_1=V_n(K,-K[n-1])$.

Substituting in \eqref{eq:csqcewith1}, we thus obtain
\begin{equation}
    \label{eq:yoel23}
    \alpha_1 \cdot v^{(p)}_{a_0, \ldots,a_p}=\sum_{\ell=0}^p v^{(p+1)}_{n-1,a_0, \ldots, a_{\ell-1},a_{\ell}+1, \ldots,a_p}.
\end{equation}
 
Recall that we are assuming $a_{\ell} <n$ for $0\leq \ell \leq p$. Thus, we may apply \eqref{eq:tobeused0} with $s_0 = 1, s_1 = n-a_1 -1$, and $s_j=n-a_j$ for $2\leq j\leq p$ to obtain
 \begin{equation}
 \label{eq:aaron0}
v^{(2p)}_{a_0+1,n-1, n[p-1],a_1, \ldots,a_p}= \binom{n}{n-a_1, \ldots,n-a_p,n-a_0-1,1} v^{2p},
 \end{equation}
 which by \eqref{eq:dimensiondown} simplifies to
 \begin{equation}
 \label{eq:meir0}
 v^{(p+1)}_{n-1,a_0+1,a_1, \ldots,a_p}=n \binom{n-1}{n-a_0-1,n-a_1, \ldots,n-a_p} \cdot v^{p+1}.
 \end{equation}
Similarly, for any $\ell \in \{1, \ldots, p\}$, applying \eqref{eq:tobeusedell} with $s_\ell=1$, $s_0=n-a_0-1$ and $s_j=n-a_j$ for $j\in [p]\setminus \{\ell\}$ yields that
\begin{equation}
 \label{eq:meirell}
 v^{(p+1)}_{n-1,a_0,a_1, \ldots,a_{\ell}+1, \ldots,a_p}=n \binom{n-1}{n-a_0,n-a_1, \ldots,n-a_{\ell}-1,  \ldots, n-a_p} \cdot v^{p+1}.
 \end{equation}
 Denote $C_{\ell}= \binom{n-1}{n-a_0,n-a_1, \ldots,n-a_{\ell}-1,  \ldots, n-a_p}$. Combining \eqref{eq:yoel23}, \eqref{eq:meir0} and \eqref{eq:meirell}, one finds
 \begin{align*}
\alpha_1 \cdot v^{(p)}_{a_0, \ldots,a_p}= n\cdot v^{p+1} \cdot \sum_{\ell=0}^p C_{\ell} =n \cdot v^{p+1} \cdot \binom{n}{n-a_0, \ldots, n-a_p},
\end{align*}
where the second equality uses the multinomial version of Pascal's identity. Using one last time our assumption that $v^{(p)}_{a_0, \ldots,a_p}= \binom{n}{n-a_0,  \ldots, n-a_p} v^p$, we find that $\alpha_1 = n \vol_n(K)$. By Proposition \ref{prop:godbersen_j=1_equality}, this means that $K$ must be an $n$-simplex, completing the proof.

\printbibliography

\end{document}